\tikzset{every picture/.style={line width=0.75pt}}
\newtheorem{theorem}{Theorem}
\newtheorem{lemma}[theorem]{Lemma}
\newtheorem{claim}[theorem]{Claim}
\newtheorem{corollary}[theorem]{Corollary}
\newtheorem{problem}[theorem]{Problem}
\theoremstyle{remark}
\newtheorem{remark}[theorem]{Remark}
\newtheorem*{remark*}{Remark}
\newtheorem{example}[theorem]{Example}
\title{New Helly-type results for discrete boxes:\\ Quantitative colorful and $(p,q)$-variants}
\author{Rahul Gangopadhyay\thanks{Department of Computer Science and Engineering, Indian Institute of Technology (ISM), Dhanbad, India. Email address: \href{mailto:rahulgangopadhyay@iitism.ac.in}{rahulgangopadhyay@iitism.ac.in}}  
    \and
    Alexander Polyanskii\thanks{Emory University, 400 Dowman Drive, 30322, Atlanta GA, USA. Email address: \href{mailto:apolian@emory.edu}{apolian@emory.edu}. Homepage: \url{http://polyanskii.com/}.}
    \and
    Wei Rao\thanks{Moscow Institute of Physics and Technology, Institutsky lane 9, Dolgoprudny, Moscow region, 141700, Russia. Email address: \href{mailto:raowei1998@gmail.com}{raowei1998@gmail.com}}    
}
\date{}
\begin{document}

\maketitle

\begin{abstract}
In 2008, Halman showed that for any finite set $P\subset \mathbb R^d$ and any finite family \( \mathcal{B} \) of axis-parallel boxes in \( \mathbb{R}^d \), if the intersection of \( P \) and any subfamily \( \mathcal{B}' \subseteq\mathcal{B} \) of size at most \( 2d \) is non-empty, then the intersection of \( P \) and  \( \mathcal{B} \) is also non-empty. Very recently Edwards and Sober\'on initiated the study of quantitative colorful version for $2d$ families, $(p,q)$-type variation for $p\geq q\geq d+1$, and other extensions of this Helly-type result by Halman. 

In this paper, we study the quantitative colorful Halman problem for $2d-1$ families as well its $(p,q)$-type variation for $p\geq q\geq 2$. Specifically, our main result asserts that for any finite set $P$ and finite families of boxes $\mathcal{B}_1,\dots,\mathcal{B}_{2d-1}$ in $\mathbb R^d$, where $d\geq 2$, if every transversal $\mathcal{B}$ for the families has an intersection $\bigcap \mathcal{B}$ containing at least $n$ points of $P$, then there exist $j\in[2d-1]$ and a subset of $P$ of size at most
\[
2n+\Big\lfloor \frac{n-1}{d \cdot 2^{d-1}} \Big\rfloor,
\]
such that each box of $\mathcal{B}_j$ contains at least $n$ points of this subset.

\end{abstract}

{\bf Keywords:} colorful Helly's theorem

\section{Introduction}

In 1923, Helly~\cite{helly1923mengen} published one of the seminal results in discrete geometry. A family $\mathcal F$ of sets is said to be \textit{intersecting} if its intersection $\bigcap \mathcal F$ is non-empty. The Helly theorem states that for a finite family $\mathcal K$ of convex sets in $\mathbb R^d$, if any of its subfamilies of at most $d+1$ sets is intersecting, then \( \mathcal K \) itself is intersecting. This theorem has various classical extensions, including its colorful and $(p,q)$-type versions.

According to B\'ar\'any~\cite{barany1982generalization}, Lov\'asz proved the so-called colorful Helly theorem. For families $\mathcal F_1,\dots, \mathcal F_n$ of sets, a family $\mathcal F'$ is called a \textit{transversal} if $\mathcal F'=\{F_1,\dots, F_n\}$ for some $F_1\in \mathcal F_1, \dots, F_n\in \mathcal F_n$. The families $\mathcal F_1,\dots, \mathcal F_n$ are said to have the \textit{colorful Helly property} if any of their transversals is intersecting. Lov\'asz showed that if finite families $\mathcal K_1,\dots, \mathcal K_{d+1}$ of convex sets in $\mathbb R^d$ have the colorful Helly property, then one of the families $\mathcal K_i$ is intersecting.

In 1992, Alon and Kleitman~\cite{alon1992piercing} proved the so-called $(p,q)$ theorem. For positive integers $p\geq q$, a family $\mathcal F$ of sets has the \textit{$(p,q)$ property} if for any subfamily $\mathcal G$ of $\mathcal F$ of $p$ sets, there is an intersecting subfamily of $\mathcal G$ of size $q$. For a family $\mathcal F$, its \textit{piercing} (or \textit{transversal}) \textit{number}, denoted by $\tau(\mathcal F)$, is the smallest size of a set $S$ intersecting each member of $\mathcal F$. In particular, a family $\mathcal F$ is intersecting if and only if $\tau (\mathcal F)=1$. Alon and Kleitman showed that for any $p\geq q\geq d+1$, there exists $n:=n(p,q,d)$ such that if a finite family $\mathcal K$ of convex sets in $\mathbb R^d$ has the $(p,q)$ property, then $\tau(\mathcal K)\leq n$.
\smallskip

In 2008, Halman~\cite{Hal} studied intersection properties of the trace of a family of boxes on a finite point set. An \textit{axis-parallel box} (or simply a \textit{box}) in $\mathbb R^d$ is the Cartesian product of $d$ line segments of $\mathbb R$. (In this paper, by a line segment we mean a \textit{closed} line segment, unless stated otherwise.)
For a set $X$ and a family $\mathcal F$ of sets,  the \textit{trace} of $\mathcal F$ on $X$, denoted by  $\mathcal F|_X$, is the family 
\(
\{F\cap X:F\in \mathcal F\}.
\)

\begin{theorem}[Halman theorem; Theorem 2.10 in \cite{Hal}]
\label{Halman theorem}
    Let $d$ be a positive integer. Let $P$ be a finite set in $\mathbb R^d$ and let $\mathcal B$ be a finite family of boxes in $\mathbb R^d$. If for any subfamily $\mathcal B'\subseteq \mathcal B$ of size at most \(2d\), its trace $\mathcal B'|_P$ is intersecting, then \(\mathcal{B}|_P\) is also intersecting.
\end{theorem}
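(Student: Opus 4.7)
My plan is to exploit the product structure of axis-parallel boxes via an extremal-selection argument. The key observation is that for any finite family $\mathcal B$ of axis-parallel boxes, the whole intersection $\bigcap \mathcal B$ is already realized by a subfamily of at most $2d$ members, chosen to pin down each of the $2d$ coordinate bounds. Once this is established, the theorem becomes an immediate application of the hypothesis.

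Concretely, I would begin by writing every box $B\in\mathcal B$ in coordinates as $\prod_{j=1}^{d}[a_j(B),b_j(B)]$. Because $\mathcal B$ is finite, for each coordinate $i\in[d]$ there exist a box $B_i^-\in\mathcal B$ attaining $\max_{B\in\mathcal B}a_i(B)$ and a box $B_i^+\in\mathcal B$ attaining $\min_{B\in\mathcal B}b_i(B)$. Form the subfamily $\mathcal B^*:=\{B_i^-,B_i^+:i\in[d]\}\subseteq\mathcal B$; then $|\mathcal B^*|\le 2d$. A routine coordinate-wise check shows that $\bigcap\mathcal B^*$ and $\bigcap\mathcal B$ have the same bounding intervals in every direction, and hence $\bigcap\mathcal B^*=\bigcap\mathcal B$.

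The conclusion is then automatic: applying the hypothesis to $\mathcal B^*$, whose size is at most $2d$, produces a point $p\in P\cap\bigcap\mathcal B^*=P\cap\bigcap\mathcal B$, which witnesses that $\mathcal B|_P$ is intersecting. So the whole proof reduces to identifying the subfamily $\mathcal B^*$ and verifying the identity $\bigcap\mathcal B^*=\bigcap\mathcal B$.

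I do not foresee any substantive obstacle in this plan; the argument is a single extremal-selection step, with all the work contained in the coordinate-wise identification of the intersection. What is conceptually noteworthy is how the number $2d$ arises: an axis-parallel box in $\mathbb R^d$ is determined by $2d$ scalar parameters (one lower and one upper endpoint per axis), and the discrete Helly property must simultaneously pin down all of them via a single point of $P$. This also explains the sharp gap between the continuous Helly number $2$ for axis-parallel boxes and the discrete Helly number $2d$ appearing here; tightness is easily seen, for instance, by taking $P$ to be the vertices of an axis-aligned cross configuration and $\mathcal B$ to be $2d$ boxes each missing one such vertex.
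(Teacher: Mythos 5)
Your proof is correct: the extremal selection of the $2d$ boxes attaining $\max_B a_i(B)$ and $\min_B b_i(B)$ does give a subfamily $\mathcal B^*$ with $\bigcap\mathcal B^*=\bigcap\mathcal B$, and the hypothesis applied to $\mathcal B^*$ finishes the argument. The paper itself only cites this result from Halman without reproving it, but your argument is the standard one and matches the extremal left/right-endpoint selections the authors use throughout their own proofs.
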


Recently, Edwards and Sober{\'o}n~\cite{edwards2025extensions} initiated studies of generalizations of Halman's result, including a quantitative colorful extension on $n$-intersecting families and a $(p,q)$-type version. A family $\mathcal F$ of sets is said to be \textit{$n$-intersecting} if $|\bigcap \mathcal F|\geq n$.

\begin{theorem}[quantitative colorful Halman theorem for $2d$ colors, Theorem 1.2 in \cite{edwards2025extensions}]\label{quantitative colorful Halman theorem for 2d colors}
    Let $d$ and $n$ be positive integers. Let $P$ be a finite set in $\mathbb R^d$ and let $\mathcal{B}_1, \dots, \mathcal{B}_{2d}$ be finite families of boxes in $\mathbb R^d$. If for any transversal $\mathcal B$, its trace $\mathcal B|_P$  is $n$-intersecting, then there is $i\in[2d]$ such that the trace $\mathcal B_i|_P$ is $n$-intersecting.
\end{theorem}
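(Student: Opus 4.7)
The plan is to mimic Halman's extremal-box argument in the quantitative colorful setting. For each color $j \in [2d]$ and axis $i \in [d]$, put $a^*_{j,i} := \max_{B \in \mathcal B_j} a_i(B)$ and $b^*_{j,i} := \min_{B \in \mathcal B_j} b_i(B)$, so that $\bigcap \mathcal B_j = \prod_{i=1}^d [a^*_{j,i}, b^*_{j,i}]$. For each $j$ and each role $(i,\sigma) \in [d]\times\{+,-\}$, fix a box $B^{(i,\sigma)}_j \in \mathcal B_j$ realising the corresponding extremum (so $a_i(B^{(i,-)}_j) = a^*_{j,i}$ and $b_i(B^{(i,+)}_j) = b^*_{j,i}$). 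Given a bijection $\phi \colon [2d] \to [d]\times\{+,-\}$ between colors and roles, form the transversal $T_\phi = \{B^{\phi(j)}_j : j \in [2d]\}$. Mimicking the argument in the proof of Theorem~\ref{Halman theorem}, any $p \in \bigcap T_\phi$ must satisfy $p_i \geq a^*_{\phi^{-1}(i,-),\,i}$ and $p_i \leq b^*_{\phi^{-1}(i,+),\,i}$ for every $i \in [d]$, and hence
\[
\bigcap T_\phi \ \subseteq\ M(\phi) := \prod_{i=1}^d \bigl[a^*_{\phi^{-1}(i,-),\,i},\ b^*_{\phi^{-1}(i,+),\,i}\bigr].
\]

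By the colorful $n$-intersecting hypothesis, $|M(\phi) \cap P| \geq |\bigcap T_\phi \cap P| \geq n$, so it is enough to exhibit some pair $(\phi, k)$ with $M(\phi) \subseteq \bigcap \mathcal B_k$: then $|\bigcap \mathcal B_k \cap P| \geq n$ and the trace $\mathcal B_k|_P$ is $n$-intersecting. The containment is equivalent to the $2d$ inequalities $a^*_{\phi^{-1}(i,-),\,i} \geq a^*_{k,\,i}$ and $b^*_{\phi^{-1}(i,+),\,i} \leq b^*_{k,\,i}$ for all $i \in [d]$. I would first try the greedy assignment in which each role $r$ goes to a color achieving the \emph{global} extremum in $r$: if the $2d$ colors arising this way are pairwise distinct, $\phi$ is a valid bijection, $M(\phi)$ collapses to the grand intersection $\bigcap_{j=1}^{2d}\bigcap \mathcal B_j$, and the containment holds for every $k$. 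Otherwise some color $j_0$ is globally extremal in several roles; $j_0$ can occupy only one of them in $\phi$, and the remaining conflicting roles must be reassigned to ``second-best'' colors, with $k$ chosen so that its role is served by $k$ itself (giving equality) while all other roles are still served by globally extremal colors.

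Turning this substitution into an existence proof for $(\phi, k)$ in full generality is the main obstacle, since cascading collisions can occur when many colors are extremal in many roles. I would encode the question as a bipartite matching problem: for each candidate $k \in [2d]$, let $G_k$ be the bipartite graph between colors and roles whose edge set is $\{(j, r) : \text{color } j \text{ is at least as extremal as } k \text{ in role } r\}$; a perfect matching in $G_k$ supplies the desired $\phi$. The needed claim is that at least one $k$ yields such a matching. I would try to establish Hall's condition by taking $k$ maximal in the partial order on colors given by $\bigcap \mathcal B_j \subseteq \bigcap \mathcal B_k$, so that every $j \neq k$ dominates $k$ in at least one role, and then controlling Hall's neighbourhood inequality for larger role-subsets by comparing the $2d$ rank-profiles of $k$ with those of the other colors.
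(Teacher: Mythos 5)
First, note that this theorem is not proved in the present paper; it is quoted from Edwards and Sober\'on, so the relevant comparison is with the paper's analogous machinery for $2d-1$ colors (items $(u)$ and $(v)$ in Section~2.1.1). Your reduction is sound as far as it goes: the roles, the extremal boxes $B_j^{(i,\sigma)}$, the containment $\bigcap T_\phi\subseteq M(\phi)$, and the observation that exhibiting $(\phi,k)$ with $M(\phi)\subseteq\bigcap\mathcal B_k$ finishes the proof are all correct. But the existence of such a pair is the entire content of the theorem, and you leave it open. Worse, the selection rule you sketch for closing it --- fix $k$ maximal in the containment order and verify Hall's condition in $G_k$ --- is not valid. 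Take $d=2$ with $\bigcap\mathcal B_1=[10,11]\times[0,20]$ and $\bigcap\mathcal B_2=\bigcap\mathcal B_3=\bigcap\mathcal B_4=[0,20]\times[5,15]$ (realizable by two-box families for which every transversal is amply intersecting). Color $1$ is maximal, since $[0,20]\not\subseteq[5,15]$ in the second factor; yet color $1$ is strictly the most extremal color in both roles $(1,-)$ and $(1,+)$, so in $G_1$ those two roles each have neighbourhood $\{1\}$ and Hall's condition fails for $S=\{(1,-),(1,+)\}$. Here a different $k$ (any of $2,3,4$) does admit a perfect matching, so maximality in the containment order is simply the wrong way to choose $k$, and ``every $j\neq k$ dominates $k$ in some role'' only gives minimum degree one on the color side, far short of Hall's condition.

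The missing step has a short greedy proof, which is exactly the mechanism this paper uses in its $(u)$/$(v)$ construction: order the $2d$ roles arbitrarily as $r_1,\dots,r_{2d}$; for $s=1,\dots,2d-1$ let $j_s$ be a color most extremal in role $r_s$ among the colors not yet chosen; let $k$ be the single remaining color and assign it to $r_{2d}$. Since $k$ is still available at every step $s\leq 2d-1$, the chosen $j_s$ satisfies $j_s\succeq_{r_s}k$, and $k\succeq_{r_{2d}}k$ trivially; hence $M(\phi)\subseteq\prod_{i}[a^*_{k,i},b^*_{k,i}]=\bigcap\mathcal B_k$ for this $\phi$, and your argument then yields $|\bigcap\mathcal B_k\cap P|\geq|M(\phi)\cap P|\geq n$. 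With this replacement for your matching step the proof is complete; without it, the proposal does not establish the theorem.
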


\begin{theorem}[$(p,q)$-type Halman theorem for $p\geq q\geq d+1$; Theorem 1.4 in \cite{edwards2025extensions}]\label{pq Halman theorem for q at least d plus 1}
  Let $p,q,$ and $d$ be positive integers with $p\geq q\geq d+1$. There exists $N:=N(p,q,d)$ such that any finite set $P\subset \mathbb R^d$ and any finite family $\mathcal B$ of boxes in $\mathbb R^d$ satisfy the following. If the trace $\mathcal B|_P$ has the $(p,q)$ property and does not contain the empty set, then $\tau(\mathcal B|_P)\leq N$.
\end{theorem}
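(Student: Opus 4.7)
The plan is to run the classical Alon--Kleitman template, using Halman's theorem (Theorem \ref{Halman theorem}) as the Helly-type input for the trace system $\mathcal B|_P$. The argument has three ingredients: a fractional Helly theorem for traces on $P$, a double-counting conversion of the $(p,q)$ hypothesis into density of intersecting small subfamilies, and an LP-duality plus $\epsilon$-net argument to bound the piercing number.

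First I would establish a fractional Helly theorem for $\mathcal B|_P$: for every $\alpha>0$ there exists $\beta=\beta(\alpha,d)>0$ such that whenever $|\mathcal B|=N$ and at least $\alpha\binom{N}{2d}$ of the $2d$-subfamilies of $\mathcal B$ have nonempty trace intersection on $P$, some point of $P$ lies in the traces of at least $\beta N$ boxes. Halman's theorem bounds the Helly number of the trace system by $2d$, so its nerve has dimension less than $2d$; applying the standard Kalai-style upper-bound-theorem argument to this nerve (together with the existence of many full-dimensional simplices supplied by the hypothesis) produces the required $\beta$.

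Next, the $(p,q)$ property translates, by a standard double counting over $p$-tuples, into the statement that an $\alpha=\alpha(p,q,d)>0$ fraction of all $2d$-tuples (when $q\geq 2d$) or $q$-tuples (when $q<2d$) of $\mathcal B|_P$ have intersecting trace. Combined with fractional Helly, this produces a heavy point $x_0\in P$ contained in $\beta N$ traces; iterating on the remaining $(1-\beta)N$ boxes gives an $O(\log N)$-size piercing. The logarithm is removed by the standard Alon--Kleitman LP-duality argument: one bounds the value of the fractional transversal LP on $\mathcal B|_P$ by $O(\beta^{-1})$ using fractional Helly, and rounds to an integral transversal via the $\epsilon$-net theorem, which applies because boxes have VC-dimension $O(d)$ and hence so does the trace system on $P$.

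The main obstacle is reconciling the regime $d+1\leq q<2d$ with the $2d$-tuple level of Halman's theorem, since the naive double counting in step two only provides density at the $q$-tuple level in this range. Bridging this gap appears to require a strengthening of Halman's theorem to a colorful version with $d+1$ colors rather than the $2d$ of Theorem \ref{quantitative colorful Halman theorem for 2d colors}; such a colorful statement---plausibly provable by a coordinate-wise induction that exploits the product structure of boxes---would upgrade the fractional Helly theorem from $2d$-tuples to $(d+1)$-tuples and close the loop, yielding the bound for every $q\geq d+1$.
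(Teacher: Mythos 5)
First, a point of reference: the paper does not prove Theorem~\ref{pq Halman theorem for q at least d plus 1} at all. It is quoted from Edwards and Sober\'on~\cite{edwards2025extensions} and used as a black box in the proof of Theorem~\ref{pq Halman theorem for q at least 2}, so there is no in-paper argument to compare yours against. Your Alon--Kleitman skeleton (fractional Helly, then LP duality to bound the fractional transversal, then $\epsilon$-net rounding using the bounded VC dimension of boxes) is the standard and surely the intended route, and the last two steps are fine as written.

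The genuine gap is in your first step, and your own proposed repair for the regime $d+1\leq q<2d$ does not work. Halman's theorem (Theorem~\ref{Halman theorem}) bounds the Helly number of $\mathcal B|_P$ by $2d$, but this does not bound the dimension of the nerve --- arbitrarily many traces can share a point of $P$ --- and a bounded Helly number alone does not yield a fractional Helly theorem; the Kalai-type upper-bound argument you invoke requires the nerve to be $(2d-1)$-Leray or collapsible, a strictly stronger property that you have not established. Your fallback, a colorful Halman theorem with $d+1$ colors, is a legitimate mechanism in principle (by Holmsen's result that colorful Helly implies fractional Helly at the same level), but the statement you would need is false: the present paper shows that already with $2d-1$ colors the colorful conclusion degrades from ``some $\mathcal B_j|_P$ is intersecting'' to a piercing bound of $2$ (Theorem~\ref{quantitative colorful Halman 2d-1 colors} with $n=1$, tight by Claim~\ref{claim: example}), so no clean $(d+1)$-color version is available. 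The correct way to close the loop is Matou\v{s}ek's theorem that a dual shatter function in $o(m^{k})$ forces fractional Helly number at most $k$: the endpoints of $m$ boxes cut each axis into $O(m)$ pieces, so $m$ boxes determine only $O(m^d)=o(m^{d+1})$ distinct containment patterns on $\mathbb R^d$, hence the trace system $\mathcal B|_P$ has fractional Helly number at most $d+1$. This covers all $q\geq d+1$ at once (consistent with Theorem~\ref{leeom}, which is the $(2,2)$ instance of this phenomenon), and with that substitution your outline goes through.
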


Jensen, Joshi, and Ray~\cite{Jen} asked to find minimal $m$ such that any finite set $P\subset \mathbb R^2$ and any finite family $\mathcal B$ of boxes satisfy the following Helly-type property. If for any $\mathcal B'\subseteq \mathcal B$ of size 3, its trace $\mathcal B'|_P$ is intersecting, then $\tau(\mathcal B|_P)\leq m$. To the best of our knowledge no paper has addressed this question. It is worth mentioning that Theorem~\ref{monochromatic version} and Example~\ref{example lower bound for monochromatic version} imply that $m=2$.

The goal of the current paper is to prove further quantitative colorful and $(p,q)$-type extensions of Halman theorem. In particular, we answer to the question of Jensen, Joshi, and Ray.

\medskip

Motivated by the quantitative colorful Halman theorem for \(2d\) colors by Edwards and Sober\'on, we examine the corresponding problem for \(2d - 1\) colors. To state our main result, we introduce a quantitative generalization of the transversal number. For a family \(\mathcal{F}\) of sets, a \textit{$k$-multitransversal} is a set \(S\) such that each member of \(\mathcal{F}\) contains at least \(k\) elements of \(S\). The \textit{$k$-multitransversal number} of \(\mathcal{F}\), denoted by \(\tau_k'(\mathcal{F})\), is the smallest size of a $k$-multitransversal.\footnote{F{\"u}redi \cite{furedi1988matchings} introduced the notion of $k$-covers to interpolate between the transversal number and the fractional transversal number. A $k$-cover is closely related to a $k$-multitransversal, except that $S$ in the definition is allowed to be a multiset rather than a set. Specifically, for a family \(\mathcal{F}\) of sets, a \textit{$k$-cover} is a multiset \(S\) such that each member of \(\mathcal{F}\) contains at least \(k\) elements of \(S\). The \textit{$k$-covering number} of \(\mathcal{F}\), denoted by \(\tau_k(\mathcal{F})\), is the minimum size of a $k$-cover.}
 In particular, \(\tau_1'(\mathcal{F}) = \tau(\mathcal{F})\).

\begin{theorem}[quantitative colorful Halman theorem for $2d-1$ colors]\label{quantitative colorful Halman 2d-1 colors}
Let \( n \geq 1 \) and \( d \geq 2 \) be integers. Let $P\subset \mathbb R^d$ be a finite set, and let \( \mathcal{B}_1, \dots, \mathcal{B}_{2d-1} \) be finite families of boxes in $\mathbb R^d$. If for any their transversal $\mathcal B$, its trace $\mathcal B|_P$ is \( n \)-intersecting, then there exists $j\in [2d-1]$ such that   \[
\tau_n'(\mathcal B_i|_P)\leq 2n+\Big\lfloor \frac{n-1}{d\cdot 2^{d-1}} \Big\rfloor.
\]
\end{theorem}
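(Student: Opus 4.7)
The plan is to extend the proof strategy behind Edwards and Sober\'on's theorem for $2d$ colors (Theorem~\ref{quantitative colorful Halman theorem for 2d colors}) to the one-color-short setting, extracting a multitransversal from the trace of a single carefully chosen transversal, with the gap between $2d-1$ colors and $2d$ direction-side pairs doing the combinatorial heavy lifting.

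First, I would recall the Halman extremal-box trick: for each color $j\in[2d-1]$ and each direction-side pair $(k,\epsilon)\in[d]\times\{-,+\}$, pick a box $B_j^{k,\epsilon}\in\mathcal{B}_j$ whose $(k,\epsilon)$-boundary is most restrictive within $\mathcal{B}_j$. Then $\bigcap\mathcal{B}_j=\bigcap_{(k,\epsilon)}B_j^{k,\epsilon}$, reducing intersection questions about $\mathcal{B}_j$ to questions about these $2d$ extremal boxes. This mirrors the proof of Theorem~\ref{Halman theorem}.

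Second, I would fix a distinguished transversal $\mathcal{T}^\star=(T_1^\star,\dots,T_{2d-1}^\star)$ — chosen, for instance, to minimize $|P\cap\bigcap\mathcal{T}^\star|$ — and let $Q:=P\cap\bigcap\mathcal{T}^\star$, which satisfies $|Q|\ge n$ by hypothesis. For each $(k,\epsilon)$, let $Q_{k,\epsilon}\subseteq Q$ consist of the $n$ points of $Q$ that are $\epsilon$-extremal in coordinate $k$; each $Q_{k,\epsilon}$ is robust under tightening the $(k,\epsilon)$-boundary of any box of $\mathcal{T}^\star$. Swapping $T_j^\star$ with an arbitrary $B\in\mathcal{B}_j$ produces a transversal with $n$-intersecting trace, and tracking where its witness points lie relative to the $Q_{k,\epsilon}$'s partitions $\mathcal{B}_j$ into direction-side types depending on how $B$ is looser than $T_j^\star$.

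Third, since $|[d]\times\{-,+\}|=2d>2d-1$, any injective assignment of direction-side pairs to colors must leave one pair aside, forcing some color $j^\star$ to require only two direction-side roles to cover all of $\mathcal{B}_{j^\star}$. The candidate multitransversal $S_{j^\star}:=Q_{k_1,\epsilon_1}\cup Q_{k_2,\epsilon_2}$ then has size at most $2n$, and I would augment it by at most $\lfloor(n-1)/(d\cdot 2^{d-1})\rfloor$ further points from $Q$ to handle boxes of $\mathcal{B}_{j^\star}$ that slip past both extremal pieces. The denominator $d\cdot 2^{d-1}$ matches the number of edges of the $d$-cube and should correspond to the combinatorial types of escape configurations available to such a box.

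The main obstacle is this last bookkeeping step. One must show that a box $B\in\mathcal{B}_{j^\star}$ containing fewer than $n$ points of $S_{j^\star}$ forces the geometry of $B$ relative to $\mathcal{T}^\star$ into one of finitely many combinatorial types indexed by the $d\cdot 2^{d-1}$ edges, and that $\lfloor(n-1)/(d\cdot 2^{d-1})\rfloor$ additional points can be chosen to patch all of them simultaneously. Coordinating the choice of $\mathcal{T}^\star$, the pigeonhole color $j^\star$, and the two direction-sides $(k_1,\epsilon_1),(k_2,\epsilon_2)$ with this final counting — so that the resulting $S_{j^\star}$ is genuinely an $n$-multitransversal for $\mathcal{B}_{j^\star}$ — is where the bulk of the technical work lies.
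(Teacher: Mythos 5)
Your outline identifies some of the right ingredients (extremal boxes, the surplus of $2d$ direction--side pairs over $2d-1$ colors, a $2n$-point core plus a small correction term), but the two steps that carry all the weight are not established, and the sketch you give for them does not lead to a working argument. First, the claim that some color $j^\star$ can be handled by two direction--side pairs, with $S_{j^\star}=Q_{k_1,\epsilon_1}\cup Q_{k_2,\epsilon_2}$ drawn from the intersection of a single distinguished transversal $\mathcal{T}^\star$, is unsupported: for a set of $n$ points that are $\epsilon$-extremal in coordinate $k$ inside $\bigcap\mathcal{T}^\star$ to actually lie in an arbitrary $B\in\mathcal{B}_{j^\star}$, you need all the \emph{other} coordinates of those points to be controlled relative to $B$, and extremality within $Q$ does not give this. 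The paper's Lemma~\ref{claim induction reduction} achieves the needed control differently: for each of $d-1$ coordinates it greedily selects \emph{two} boxes from two \emph{distinct other families} (one with the largest left endpoint, one with the smallest right endpoint of the $i_s$-projection, chosen across families rather than within one family), so that the intersection $\bigcap\mathcal{B}(m)$ of these $2d-2$ boxes has its $i_s$-projection contained in $\pi_{i_s}(B)$ for \emph{every} $B$ in the one leftover family $\mathcal{B}_{j(m)}$. Membership in $B$ then reduces to a one-dimensional condition on the single remaining coordinate $i(m)$, and the $2n$ points are the $n$ points on each side of the gap in that \emph{one} coordinate --- not two extremal $n$-sets in two different coordinates. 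Your pigeonhole count $2(d-1)+1=2d-1$ is morally the right bookkeeping, but the pairing has to be between coordinates and families, not between direction--side pairs and a single color.

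Second, the correction term $\lfloor(n-1)/(d\cdot 2^{d-1})\rfloor$ does not arise from classifying ``escape configurations'' of boxes relative to one fixed transversal and patching each type. In the paper it is the output of an averaging argument: Lemma~\ref{claim induction reduction} gives, for \emph{each} of the $d\cdot 2^{d-1}\cdot(d-1)!\cdot\binom{d}{d-1}$-many configurations $m$ (a choice of $d-1$ ordered coordinates with signs), the bound $\tau_n'(\mathcal{B}_{j(m)}|_P)\le 2n+|\bigcap\mathcal{B}(m)\cap\Delta(m)\cap P|$, where $\Delta(m)$ is the slab over the gap in the $i(m)$-projection of $\mathcal{B}_{j(m)}$. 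Lemma~\ref{lemma on a measure} then selects $d(d-1)2^{d-1}$ of these configurations so that each point $p\in P$ is counted in $\bigcap\mathcal{B}(m)\cap\Delta(m)$ for at most as many $m$'s as there are surplus families $j\in J\setminus I$ with $p\in\bigcap\mathcal{B}_j$; combined with the reduction that lets one assume $|\bigcap\mathcal{B}_j\cap P|\le n-1$ for those $d-1$ families, double counting yields a configuration $m$ with $|\bigcap\mathcal{B}(m)\cap\Delta(m)\cap P|\le (n-1)/(d\cdot 2^{d-1})$. Without some such averaging over many configurations there is no visible route to the $1/(d\cdot 2^{d-1})$ factor, and your proposal contains no substitute for it --- indeed you flag this step as the ``main obstacle'' and leave it open. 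As written, the proposal is a plan with the decisive lemmas missing rather than a proof.
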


We provide an example showing that this bound is close to being tight: For any \( n \geq 1 \) and \( d \geq 2 \), there exists a finite set \( P \subset \mathbb{R}^d \) and finite families \( \mathcal{B}_1, \dots, \mathcal{B}_{2d-1} \) of boxes in \( \mathbb{R}^d \) such that for any transversal \( \mathcal{B} \) of them, its trace \( \mathcal{B}|_P \) is \( n \)-intersecting and satisfies \( \tau_n'(\mathcal{B}_j|_P) = 2n \) for any \( j \in [2d-1] \). The obtained example is of independent interest.

\smallskip 

Another result is the following \((p,q)\)-type theorem for boxes, which extends the $(p,q)$-theorem by Edwards and Sober\'on from the range \(p \geq q \geq d+1\) to \(p \geq q \geq 2\).

\begin{theorem}[$(p,q)$-type Halman theorem for $p\geq q\geq 2$]
\label{pq Halman theorem for q at least 2}
  Let $p,q,$ and $d$ be positive integers with $p\geq q\geq 2$. There exists $N:=N(p,q,d)$ such that any finite set $P\subset \mathbb R^d$ and any finite family $\mathcal B$ of boxes in $\mathbb R^d$ satisfy the following. If the trace $\mathcal B|_P$ has the $(p,q)$ property and does not contain the empty set, then $\tau(\mathcal B|_P)\leq N$.
\end{theorem}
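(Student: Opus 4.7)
First I would reduce to $q = 2$, since the $(p, q)$ property trivially implies the $(p, 2)$ property for $q \geq 2$; the hypothesis then becomes $\nu(\mathcal{B}|_P) \leq p - 1$, i.e.\ no $p$ traces of $\mathcal{B}$ are pairwise disjoint on $P$. The goal reduces to bounding $\tau(\mathcal{B}|_P)$ by a function of $p$ and $d$ alone.

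The plan is to reduce the $(p, 2)$-property to a $(p', d+1)$-property for some $p' = p'(p, d)$, after which the already-established Theorem~\ref{pq Halman theorem for q at least d plus 1} of Edwards and Sober\'on yields $\tau(\mathcal{B}|_P) \leq N(p', d + 1, d)$. Since the trace-intersection graph of $\mathcal{B}|_P$ has independence number at most $p - 1$, Ramsey's theorem implies that any $R := R(K, p)$ traces contain $K$ pairwise trace-intersecting members, for a large constant $K = K(d)$ to be specified. The heart of the argument is then to extract from such a $K$-clique a subfamily of size $d + 1$ with common trace-intersection on $P$.

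For this extraction I exploit the box structure. Pairwise trace-intersection forces pairwise box-intersection, so by Helly's theorem for axis-parallel boxes (Helly number~$2$), the $K$ boxes share a non-empty common axis-parallel box $B_\ast$. If $B_\ast \cap P \neq \emptyset$, all $K$ share a trace-point and we are done. Otherwise every pair-witness in $P$ lies outside $B_\ast$ and hence in at least one of the $2d$ halfspaces defined by the faces of $B_\ast$. By pigeonhole, a constant fraction of the $\binom{K}{2}$ pair-witnesses lie in a single halfspace $H$, and both boxes of each such pair must extend past $B_\ast$ on the side of $H$; this yields a subfamily $E$ with $|E| = \Omega(K/\sqrt{d})$ whose common box $B_\ast^E \supsetneq B_\ast$ strictly enlarges on that side. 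Iterating the pigeonhole within $E$ relative to $B_\ast^E$, the common region grows monotonically; after $O(d)$ iterations, either the enlarged region contains a point of $P$ (giving $\geq d + 1$ boxes with common trace), or the residual subfamily---together with Halman's theorem (Helly number $2d$ for box traces) applied within it---forces $d + 1$ of its members to share a common trace.

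The main obstacle is the iterative halfspace-pigeonhole analysis: careful bookkeeping is required to control both the shrinkage of the subfamily size (by a constant factor per iteration) and the enlargement of the common box region, and to argue rigorously that after $O(d)$ iterations a $(d+1)$-wise trace-intersecting subfamily must exist. This pins down $K$ as a function of $d$, and the final bound takes the form $N(p, q, d) := N(R(K(d), p), d+1, d)$, a tower-type function of $p$ and $d$.
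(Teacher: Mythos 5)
Your overall architecture is exactly the paper's: reduce to the $(p,2)$ property, observe that the trace-intersection graph has no independent set of size $p$, apply Ramsey to find a large pairwise trace-intersecting subfamily, extract from it $d+1$ boxes with a common point of $P$, conclude that $\mathcal B|_P$ has a $(p',d+1)$ property, and finish with Theorem~\ref{pq Halman theorem for q at least d plus 1}. The one place you diverge is the extraction step, and that is precisely where the paper does \emph{not} argue from scratch: it invokes Theorem~1.3 of \cite{eom2025fractionaldiscretehellypairs} as a black box, which says that any $M(d)$ boxes whose traces on $P$ pairwise intersect contain $d+1$ whose traces have a common point. This is the genuinely hard ingredient, and your attempted direct proof of it has a real gap.

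Concretely, the iterative halfspace-pigeonhole argument does not terminate in $O(d)$ iterations. Each iteration only guarantees that the common box $B_*^E$ grows \emph{strictly} in one of the $2d$ directions --- typically just to the next-smallest facet coordinate among the surviving boxes --- so nothing prevents the process from expanding in the same direction for as many steps as there are boxes, never capturing a point of $P$. Meanwhile each iteration shrinks the subfamily from size $K$ to roughly $K/\sqrt{2d}$ (the edge-density argument only bounds the number of non-isolated vertices), so after $O(\log K/\log d)$ rounds the family is exhausted; there is no mechanism forcing the enlarged region to meet $P$ before that happens. The fallback in your last step is also unjustified: Halman's theorem (Theorem~\ref{Halman theorem}) takes $2d$-wise trace-intersection as a \emph{hypothesis} and cannot be applied to a family about which you only know pairwise trace-intersection --- deducing $(d+1)$-wise trace-intersection from pairwise is exactly the statement you are trying to prove. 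To repair the proof you should either cite the Eom--Lee result as the paper does, or supply a genuinely different and complete proof of that fractional-Helly-type statement for discrete boxes; the rest of your argument then goes through essentially verbatim.
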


\smallskip

The paper is organized as follows. In Section~\ref{section quantitative Halman theorems}, we address our quantitative Halman results. In particular, Section~\ref{subsection proof of quantitative colorful Halman theorem 2d-1 colors}  contains the proof of Theorem~\ref{quantitative colorful Halman 2d-1 colors} and the example proving a lower bound for the quantitative colorful Halman problem for $2d-1$ families, and then in Section~\ref{section: monochromatic version}, we prove upper and lower bounds for the quantitative monochromatic Halman problem for subfamilies of size $2d-1$ --- this answers the question of Jensen, Joshi, and Ray. In Section~\ref{section pq Halman theorem}, we prove Theorem~\ref{pq Halman theorem for q at least 2}, show a non-trivial bound $N(2,2,2)\geq 3$, and provide an upper bound for the monochromatic Halman problem for subfamilies of size $2d-k$, where $1\leq k\leq d-1$,  as a corollary of Theorem~\ref{pq Halman theorem for q at least 2}. Finally, in Section~\ref{section: discussion}, we propose some related problems.

\section{Quantitative Halman theorems}
\label{section quantitative Halman theorems}
\subsection{Proof of Theorem~\ref{quantitative colorful Halman 2d-1 colors}}
\label{subsection proof of quantitative colorful Halman theorem 2d-1 colors} 
\subsubsection{Notation and Lemma~\ref{claim induction reduction}}
\label{section notation}
For convenience, let \( I := [d] \) denote the set of indices corresponding to the coordinate axes of \( \mathbb{R}^d \). Similarly, let \( J := [2d-1] \) denote the set of indices corresponding to the families \( \mathcal{B}_j \) of boxes. For any \(i \in I\), let \(\pi_i : \mathbb{R}^d \to \mathbb{R}\) denote the projection onto the \(i\)th coordinate axis, which we refer to as the $i$-projection for brevity.

Using a standard compactness argument, without loss of generality, we assume that for each \( i \in I \), all projections \( \pi_i(p) \) for \( p \in P \), as well as the endpoints of the \( i \)-projections (which are segments) of boxes from the families \( \mathcal{B}_j \), are pairwise distinct.

In the proof below, we repeatedly use the following property: 
\begin{equation}
\label{equation colorful Helly for R1}
    \text{for any $i\in I$, there is at most one $j\in J$ with $\bigcap \pi_i(\mathcal B_j)=\emptyset$.}
\end{equation} 
Indeed, if $\bigcap \pi_i(\mathcal B_j)=\emptyset$, then there exist two boxes $B,B'\in \mathcal B_j$ whose $i$-projections $\pi_i(B)$ and $\pi_i(B')$ are disjoint segments. (It follows for example, from Helly's theorem for line segments on a real line.) Since any two boxes from different families intersect, the $i$-projection of any box $B''$ from another family $\mathcal B_{j'}$ with $j'\neq j$ must intersect both $\pi_i(B)$ and $\pi_i(B')$. Consequently, $\pi_i(B'')$ contains the open segment between the disjoint segments $\pi_i(B)$ and $\pi_i(B')$. Therefore, for every $j'\in J\setminus\{j\}$, the intersection $\bigcap \pi_i(\mathcal B_{j'})$ also contains this open segment and is thus nonempty.

\smallskip

Our goal in this section is to prove Lemma~\ref{claim induction reduction}, which establishes an upper bound for $\tau_n'(\mathcal B_j)$ for some $j\in J$. The obtained bound depends on some auxiliary notation, which we introduce below, along with less formal remarks explaining the meaning of the introduced objects.

The proof of Lemma~\ref{claim induction reduction} relies on the $n$-intersecting property of the trace of any transversal, though we must carefully choose the transversals we work with. Our choice is determined by a sequence $m$ consisting of all but one of the coordinate axes of \(\mathbb{R}^d\), together with a direction (left or right) for each.

Formally, let \(\mathcal{M}\) be the set of all sequences \(\big((i_s,\varepsilon_s)\big)_{s=1}^{d-1}\) such that \(i_1,\dots, i_{d-1}\) are distinct elements of \(I\), and \(\varepsilon_1,\dots, \varepsilon_{d-1} \in \{L, R\}\).

For each $m=\big((i_s,\varepsilon_s)\big)_{s=1}^{d-1}\in \mathcal M$, we introduce the following notation:
\begin{itemize}
    \item[$(i)$]  Let \(i(m)\) be the unique element of the singleton \(I \setminus \{i_1, \dots, i_{d-1}\}\), that is, $i(m)$ is the index of the only coordinate axis that does not appear in $m$.
\end{itemize}

Next, given $m\in \mathcal M$, we determine the transversals used in the proof of Lemma~\ref{claim induction reduction}. To this end, we pick $2d-2$ boxes from different families and consider all transversals sharing the chosen boxes. We pick those boxes in two stages. First, we select $d-1$ boxes, denoted below by $B_{u_s}$ for $s\in [d-1]$, such that $B_{u_s}$ is extremal, in a certain sense, with respect to the $i_s$-th coordinate axis and the direction $\varepsilon_s$.

\begin{itemize}
    \item[$(u)$] Let us define a sequence \(u_1, \dots, u_{d-1} \in J\) of distinct indices inductively. In addition to this, we also determine the sequences of boxes \(B_{u_1} \in \mathcal{B}_{u_1}, \dots, B_{u_{d-1}} \in \mathcal{B}_{u_{d-1}}\).

 Suppose that for some $s\in[d-1]$, the indices $u_1,\dots, u_{s-1}$ have been determined. Let $u_s$ be a unique $t\in J\setminus \{u_1,\dots, u_{s-1}\}$  such that
    \begin{itemize}
        \item[$(L)$] If \(\varepsilon_s = L\), then the family \(\mathcal{B}_t\) contains the box whose \(i_s\)-projection has the largest \textit{left} endpoint among all the boxes in the families \(\mathcal{B}_j\) with \(j \in J \setminus \{u_1, \dots, u_{s-1}\}\). Let \(B_{u_s}\) be the corresponding box in \(\mathcal{B}_{u_s}\). 

        \item[$(R)$] If \(\varepsilon_s=R\), then the family \(\mathcal{B}_t\) contains the box whose \(i_s\)-projection has the smallest \textit{right} endpoint among all the boxes in the families \(\mathcal{B}_j\) with \(j \in J \setminus \{u_1, \dots, u_{s-1}\}\). Let \(B_{u_s}\) be the corresponding box in \(\mathcal{B}_{u_s}\).
    \end{itemize}
\end{itemize}
    \begin{remark} 
    \label{remark definition of u and Helly}
    If for some $s\in[d-1]$ and $j\in J$, we have $\bigcap \pi_{i_s}(\mathcal B_{j})= \emptyset$, then $j\in \{u_1,\dots, u_s\}$. Indeed, if $j\not\in \{u_1,\dots, u_{s-1}\}$, then among $j'\in J\setminus \{u_1,\dots, u_{s-1}\}$, the index $j$ is the only index with $\bigcap \pi_{i_s}(\mathcal B_{j'})= \emptyset$ by~\eqref{equation colorful Helly for R1}. Therefore, the boxes with the largest left and smallest right endpoints among all the boxes in the families $\mathcal B_{j'}$ with $j'\in J\setminus\{u_1,\dots, u_{s-1}\}$ must belong to $\mathcal B_{j}$.
    \end{remark}

\begin{figure}[!h]
    \centering
    
    \includegraphics[scale=0.5]{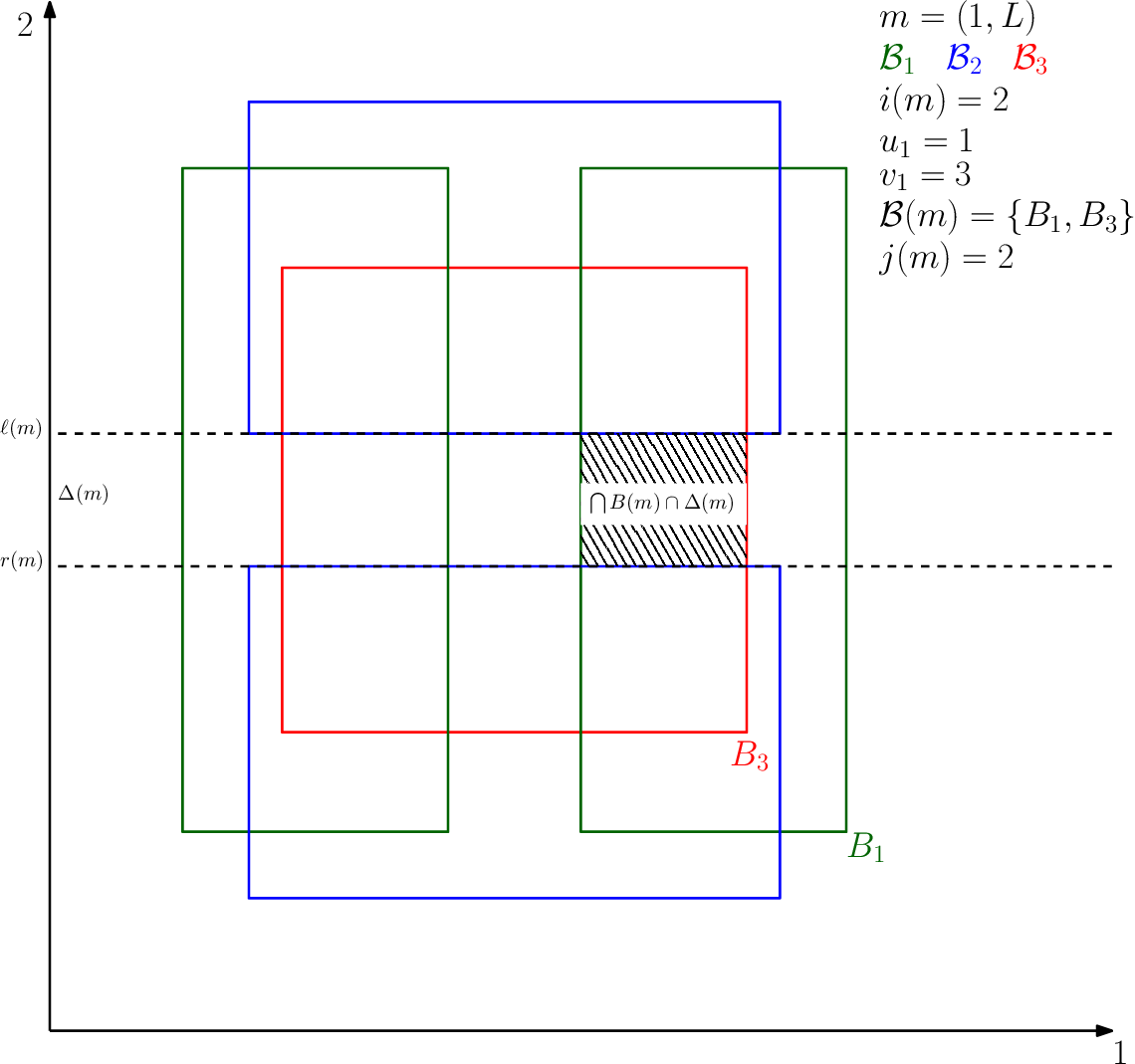}
    
    \caption{Illustration of definitions}
    \label{fi1}
\end{figure}

Next, we select the remaining $d-1$ boxes, denoted below by $B_{v_s}$ for $s\in [d-1]$, from other families (also different) such that each $B_{v_s}$ is extremal, in a certain sense, with respect to the $i_s$-th coordinate axis and the direction opposite to $\varepsilon_s$.

\begin{itemize}
\item[$(v)$] Let us define a sequence \(v_1, \dots, v_{d-1} \in J\setminus \{u_1,\dots, u_{d-1}\}\) of distinct indices inductively. In addition to this, we also determine the sequences of boxes \(B_{v_1} \in \mathcal{B}_{v_1}, \dots, B_{v_{d-1}} \in \mathcal{B}_{v_{d-1}}\).

 Suppose that for some $s\in[d-1]$, the indices $v_1,\dots, v_{s-1}$ have been determined. Let $v_s$ be the unique $t\in J\setminus \{u_1,\dots, u_{d-1},v_1,\dots, v_{s-1}\}$  such that
    \begin{itemize}
        \item[$(L)$] If \(\varepsilon_s = L\), then the family \(\mathcal{B}_t\) contains the box whose \(i_s\)-projection has the smallest \textit{right} endpoint among all the boxes in the families \(\mathcal{B}_j\) with \(j \in J \setminus \{u_1,\dots, u_{d-1},v_1, \dots, v_{s-1}\}\). Let \(B_{v_s}\) be the corresponding box in \(\mathcal{B}_{v_s}\).

        \item[$(R)$] If \(\varepsilon_s=R\), then the family \(\mathcal{B}_t\) contains the box whose \(i_s\)-projection has the largest \textit{left} endpoint among all the boxes in the families \(\mathcal{B}_j\) with \(j \in J \setminus \{u_1,\dots, u_{d-1}, v_1, \dots, v_{s-1}\}\). Let \(B_{v_s}\) be the corresponding box in \(\mathcal{B}_{v_s}\).
    \end{itemize}

\item[$(\mathcal B)$] Let $\mathcal B(m)=\{B_{u_1},\dots,B_{u_{d-1}},B_{v_1},\dots, B_{v_{d-1}}\}$. In the proof of Lemma~\ref{claim induction reduction}, we consider all the transversals sharing $\mathcal B(m)$.

\item[$(j)$] Let $j(m)$ be the unique element of the singleton $J\setminus \{u_1,\dots, u_{d-1},v_1,\dots, v_{d-1}\}$. This is the index of the family for which we are able to prove a reasonable upper bound for $\tau_n'$ in Lemma~\ref{claim induction reduction}.

\end{itemize}

    Next, we determine a region in $\mathbb R^d$, denoted below by $\Delta(m)$, needed to bound $\tau_n'(\mathcal B_{j(m)}|_P)$; see the statement of Lemma~\ref{claim induction reduction}.

\begin{itemize}
\item[$(\Delta)$] Let $\ell(m)$ and $r(m)$ be the largest left and the smallest right endpoints, respectively, of the segments in the family $\pi_{i(m)}(\mathcal B_{j(m)})$. Note that $\bigcap \pi_{i(m)}(\mathcal B_{j(m)}) = \emptyset$ if and only if $\ell(m) > r(m)$. If $\bigcap \pi_{i(m)}(\mathcal B_{j(m)}) \ne \emptyset$, let $\Delta(m)=\emptyset$. Otherwise, $\Delta(m)$ be the set of points in $\mathbb{R}^d$ whose $i(m)$-projections belong to the open line segment $(r(m),\ell(m))$. 

\end{itemize}

\begin{lemma}
\label{claim induction reduction}
    For any $m\in \mathcal M$, we have
    \[
        \tau_n'(\mathcal B_{j(m)}|_P)\leq 2n+ | \bigcap \mathcal B(m)\cap \Delta(m)\cap P|.
    \]
\end{lemma}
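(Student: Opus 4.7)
The strategy is to reduce the lemma to a one-dimensional problem on the $i(m)$-axis and then solve that 1D problem by a direct greedy construction followed by a short case analysis.

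First I would use the extremality in the definition of $B_{u_s}$ and $B_{v_s}$ to show that for every $s\in[d-1]$ and every $B\in\mathcal{B}_{j(m)}$, one has $\pi_{i_s}(\bigcap \mathcal{B}(m))\subseteq \pi_{i_s}(B)$. When $\varepsilon_s=L$, this is because $B_{u_s}$ has the largest left $i_s$-endpoint among families $\mathcal{B}_t$ with $t\in J\setminus\{u_1,\dots,u_{s-1}\}$, a set that contains $j(m)$, while $B_{v_s}$ has the smallest right $i_s$-endpoint in an analogous pool also containing $j(m)$; the case $\varepsilon_s=R$ is symmetric. Consequently, setting $Q:=\bigcap\mathcal{B}(m)\cap P$ and $T:=\pi_{i(m)}(Q)\subseteq\mathbb{R}$ (of cardinality $|Q|$ by the general-position assumption in Section~\ref{section notation}), a point $q\in Q$ belongs to $B$ if and only if $\pi_{i(m)}(q)\in\pi_{i(m)}(B)$. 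Since $\mathcal{B}(m)\cup\{B\}$ is a transversal of $\mathcal{B}_1,\dots,\mathcal{B}_{2d-1}$, the hypothesis yields $|T\cap\pi_{i(m)}(B)|\geq n$ for every $B\in\mathcal{B}_{j(m)}$; moreover $|T\cap(r(m),\ell(m))|=|\bigcap \mathcal{B}(m)\cap\Delta(m)\cap P|$ by injectivity of $\pi_{i(m)}|_Q$.

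Write $\ell=\ell(m)$, $r=r(m)$, and split $T$ into $T_R:=T\cap(-\infty,r]$, $T_L:=T\cap[\ell,\infty)$, and $T_M:=T\cap(r,\ell)$ (with $T_M=\emptyset$ when $\ell\leq r$). Define $S_R$ as the $n$ largest points of $T_R$ and $S_L$ as the $n$ smallest points of $T_L$; these are well-defined because some $B^*\in\mathcal{B}_{j(m)}$ whose $i(m)$-projection has right endpoint $r$ (resp.\ left endpoint $\ell$) satisfies $\pi_{i(m)}(B^*)\subseteq(-\infty,r]$ (resp.\ $\subseteq[\ell,\infty)$) and contains $\geq n$ points of $T$. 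Set $S:=S_R\cup S_L\cup T_M$. When $\ell>r$ the three pieces are pairwise disjoint, giving $|S|=2n+|T_M|$; when $\ell\leq r$ we have $T_M=\emptyset$ and $|S|\leq 2n$. Either way $|S|\leq 2n+|\bigcap\mathcal{B}(m)\cap\Delta(m)\cap P|$, and the preimage $\pi_{i(m)}^{-1}(S)\cap Q\subseteq P$ has the same cardinality. So the lemma reduces to showing $|S\cap\pi_{i(m)}(B)|\geq n$ for every $B\in\mathcal{B}_{j(m)}$.

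Setting $I_B:=\pi_{i(m)}(B)$, $a:=|T_R\cap I_B|$, $b:=|T_L\cap I_B|$, a short check yields $|S_R\cap I_B|=\min(n,a)$ and $|S_L\cap I_B|=\min(n,b)$ (when $a<n$, the rightmost $a$ points of $T_R$ are automatically inside $I_B$, since each such point lies in $[\text{left endpoint of }I_B, r]$; analogously for $T_L$). When $\ell>r$, the three summands $\min(n,a), \min(n,b), |T_M\cap I_B|$ come from disjoint parts of $T$, so splitting into the cases $a\geq n$, $b\geq n$, and $a,b<n$ (using $a+b+|T_M\cap I_B|=|T\cap I_B|\geq n$ in the last one) gives $|S\cap I_B|\geq n$. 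When $\ell\leq r$, the delicate point is that $S_R$ and $S_L$ may overlap inside $T\cap[\ell,r]$; here $I_B\supseteq[\ell,r]$ implies the inequalities $a,b\geq|T\cap[\ell,r]|$ together with the identity $a+b-|T\cap[\ell,r]|=|T\cap I_B|\geq n$, and an inclusion-exclusion calculation again delivers the bound. The main obstacle is precisely this bookkeeping in the case $\ell\leq r$, where the overlap of $S_R$ and $S_L$ inside $[\ell,r]$ must be tracked carefully to avoid double-counting.
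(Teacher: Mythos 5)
Your proof is correct and follows essentially the same route as the paper: the extremality of the boxes $B_{u_s},B_{v_s}$ reduces membership of points of $\bigcap\mathcal B(m)\cap P$ in a box of $\mathcal B_{j(m)}$ to a one-dimensional condition on the $i(m)$-axis, and the transversal $\mathcal B(m)\cup\{B\}$ supplies the $n$-point hypothesis for the resulting 1D problem, which is then solved by taking the $n$ extreme points on each side of $r(m)$ and $\ell(m)$ plus the gap points. Your case analysis at the end (in particular the bookkeeping for the overlap of $S_R$ and $S_L$ when $\ell(m)\leq r(m)$) just makes explicit the verification the paper leaves as ``easy to verify.''
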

\begin{remark*} 
    In particular, by the definition of $\Delta(m)$, if $\pi_{i(m)}(\bigcap \mathcal B_{j(m)})\ne \emptyset$, then $\tau_n'(\mathcal B_{j(m)}|_P)\leq 2n$.
\end{remark*}
\begin{proof}

The proof of the lemma naturally reduces to showing the following two inequalities
\begin{equation}
\label{equation reduction to one dimensional question}
    \tau_n'(\mathcal B_{j(m)}|_P)\leq \tau_n'(\pi_{i(m)}(\mathcal B_{j(m)})|_{\pi_{i(m)}(\bigcap \mathcal B(m)\cap P)})\leq 2n + \big| \bigcap \mathcal B(m)\cap \Delta(m)\cap P\big|.
\end{equation}

\textit{Proof of the first inequality of~\eqref{equation reduction to one dimensional question}.} Let $m=\big((i_s, \varepsilon_s)\big)_{s=1}^{d-1}\in \mathcal M$. Essentially, the proof relies on~\eqref{equation other projections}, the reduction property of $i(m)$-projections.

For each $s\in[d-1]$ and $B\in \mathcal B_{j(m)}$, we have
\begin{equation*}
    \pi_{i_s}(B_{u_s}\cap B_{v_s})\subseteq \pi_{i_s}(B_{u_s})\cap \pi_{i_s}(B_{v_s})\subset \pi_{i_s}(B).
\end{equation*}
Indeed, if $\varepsilon_s=L$, then the left endpoint of the segment $\pi_{i_s}(B_{u_s})$ is greater than the left endpoint of $\pi_{i_s}(B)$ because $B\in \mathcal B_{j(m)}$ and $j(m)$ distinct from $u_1,\dots, u_{s-1}$. Analogously, the right endpoint of $\pi_{i_s}(B_{v_s})$ is smaller than the right endpoint of  $\pi_{i_s}(B)$. Combining this two observation, we complete the proof of the inclusion if $\varepsilon_s=L$. Similarly, we address the case $\varepsilon_s=R$.
 
For any $Q\subseteq \bigcap \mathcal B(m)$ and any $B\in \mathcal B_{j(m)}$, we have that
\begin{equation}
\label{equation other projections}
\text{if }\pi_{i(m)}(Q)\subseteq \pi_{i(m)}(B), \text{\quad then } Q\subset B.
\end{equation}
Indeed, for any $s\in [d-1]$, we obtain that 
\[
\pi_{i_s}(Q)\subseteq \pi_{i_s}\big(\bigcap \mathcal B(m)\big)\subseteq \pi_{i_s} (B_{u_s}\cap B_{v_s})\subset \pi_{i_s}(B),
\]
which together with $\pi_{i(m)}(Q)\subseteq \pi_{i(m)}(B)$ implies that $Q\subset B$.

Now we are ready to complete the argument. Put $x:=\tau_n'(\pi_{i(m)}(\mathcal B_{j(m)})|_{\pi_{i(m)}(\bigcap \mathcal B(m)\cap P)})$. Hence there is $R\subseteq \bigcap  \mathcal B(m)\cap P$ of size $x$ such that there for each $B\in \mathcal B_{j(m)}$, there is a subset $Q_B\subseteq R$ of size $n$ with $\pi_{i(m)}(Q_B)\subseteq \pi_{i(m)}(B)$.  By~\eqref{equation other projections}, for each $B\in \mathcal B_{j(m)}$, we have $Q_B\subset B$, which implies that
\[
    x\geq \tau_n'(\mathcal B_{j(m)}|_{\bigcap \mathcal B(m)\cap P}) \geq \tau_n'(\mathcal B_{j(m)}|_{P}).
\]
This completes the proof of the first inequality of~\eqref{equation reduction to one dimensional question}.
\smallskip

\textit{Proof of the second inequality of~\eqref{equation reduction to one dimensional question}.} By the hypotheses of Theorem~\ref{quantitative colorful Halman 2d-1 colors}, for any \( B \in \mathcal{B}_{j(m)} \), the intersection \( \bigcap \mathcal{B}(m) \cap B \) contains at least \( n \) points of \( P \). Hence, \( \pi_{i(m)}(\mathcal{B}_{j(m)}) \) is a family of segments on the \( i(m) \)th coordinate axis containing at least \( n \) points of \( \pi_{i(m)}(\bigcap \mathcal{B}(m) \cap P) \). In other words, we reduce the problem to 1-dimensional problem about a finite point set of $\mathbb R$ and a family of line segments on $\mathbb R$ such that each segment contains at least $n$ points from the point set.

It remains to find a subset \( S \subseteq \pi_{i(m)}(\bigcap \mathcal{B}(m) \cap P) \) of size at most \( 2n + |\bigcap \mathcal{B}(m)\cap \Delta(m) \cap P| \) such that any segment in \( \pi_{i(m)}(\mathcal{B}_{j(m)}) \) contains at least \( n \) points of $S$. The set \( S \) is constructed by:
\begin{itemize}
    \item including the \( n \) largest points of \( \pi_{i(m)}(\bigcap\mathcal{B}(m) \cap P) \) that are less than \( r(m) \);
    \item including the \( n \) smallest points of \( \pi_{i(m)}(\bigcap\mathcal{B}(m) \cap P) \) that are greater than \( \ell(m) \);
    \item if \( \ell(m) > r(m) \), also including all points of \( \pi_{i(m)}(\bigcap\mathcal{B}(m) \cap P) \) lying in the open interval \( (r(m), \ell(m)) \). Note that the number of such points exactly $|\bigcap \mathcal{B}(m)\cap \Delta(m) \cap P|$.
\end{itemize}

Since any line segment in \( \pi_{i(m)}(\mathcal{B}_{j(m)}) \) has at least \( n \) points from \( \pi_{i(m)}(\bigcap \mathcal{B}(m) \cap P) \), and its left endpoint is no greater than \( \ell(m) \) while the right endpoint is no smaller than \( r(m) \), it is easy to verify that this segment contains at least \( n \) points of \( S\subseteq \pi_{i(m)}(\bigcap \mathcal{B}(m) \cap P) \). This completes the proof of the lemma. 
\end{proof}

    Using essentially the same argument, we can prove the following lemma, which allows to use induction on the dimension in similar colorful problems. We use it in the proof of Corollary \ref{corollary of pq Halman theorem for q at least 2}.
\begin{lemma}
\label{lemma induction reduction baby version}
        Let \( n \geq 1, d\geq 2, s\geq 3,\) and $t\geq 1$ be integers, with $d-t\geq 1$ and $s-2t\geq 1$. Let $P\subset \mathbb R^d$ be a finite set, and let \( \mathcal{B}_1, \dots, \mathcal{B}_{s} \) be finite families of boxes in $\mathbb R^d$. Assume that for any their transversal $\mathcal B$, its trace $\mathcal B|_P$ is \( n \)-intersecting. Then there exist a finite set $P'\subset \mathbb R^{d-t}$ and finite families $\mathcal B'_{j_1},\dots, \mathcal B'_{j_{s-2t}}$ of boxes in $\mathbb R^{d-t}$, where $j_1,\dots, j_{s-2t}\in[k]$ are distinct, such that 
        \begin{itemize}
            \item for any their transversal $\mathcal B'$, its trace $\mathcal B'|_{P'}$ is $n$-intersecting;
            \item $\tau_n'(\mathcal B_{j_i} |_P)\leq \tau_n'(\mathcal B_{j_i}'|_{P'})$ for each $i\in [s-2t]$;
            \item if $\mathcal B_{1}=\dots =\mathcal B_{s}$, then $\mathcal B_{j_1}'=\dots=\mathcal B'_{j_{s-2t}}$.
        \end{itemize}  
    \end{lemma}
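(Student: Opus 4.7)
The plan is to mirror the proof of Lemma~\ref{claim induction reduction}, but carry out the extremal selection on $t$ coordinates instead of $d-1$, and then project onto the remaining $d-t$ coordinates. Choose any sequence $m = \big((i_\ell, \varepsilon_\ell)\big)_{\ell=1}^{t}$ with $i_1, \dots, i_t \in [d]$ distinct (possible since $t \leq d-1$) and $\varepsilon_\ell \in \{L, R\}$, and apply the selection procedures $(u)$ and $(v)$ from Section~\ref{section notation} to obtain distinct indices $u_1, \dots, u_t, v_1, \dots, v_t \in [s]$, together with boxes $B_{u_\ell} \in \mathcal{B}_{u_\ell}$ and $B_{v_\ell} \in \mathcal{B}_{v_\ell}$; the hypothesis $s \geq 2t+1$ ensures this fits. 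Let $\{j_1, \dots, j_{s-2t}\} := [s] \setminus \{u_1, \dots, u_t, v_1, \dots, v_t\}$, set $\mathcal{B}(m) := \{B_{u_1}, \dots, B_{u_t}, B_{v_1}, \dots, B_{v_t}\}$, and let $\sigma \colon \mathbb{R}^d \to \mathbb{R}^{d-t}$ be the coordinate projection that forgets the coordinates $i_1, \dots, i_t$.

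Define $P' := \sigma(\bigcap \mathcal{B}(m) \cap P)$ and $\mathcal{B}'_{j_i} := \{\sigma(B) : B \in \mathcal{B}_{j_i}\}$ for each $i \in [s-2t]$; these are, respectively, a finite point set and finite families of axis-parallel boxes in $\mathbb{R}^{d-t}$. Sharpening the compactness assumption of Section~\ref{section notation}, we may further assume that $\sigma$ is injective on $P$. For the colorful $n$-intersecting property, any transversal $\mathcal{B}' = \{\sigma(B_{j_1}), \dots, \sigma(B_{j_{s-2t}})\}$ of the $\mathcal{B}'_{j_i}$'s lifts to the transversal $\mathcal{B} := \mathcal{B}(m) \cup \{B_{j_1}, \dots, B_{j_{s-2t}}\}$ of the original families; the hypothesis gives $|\bigcap \mathcal{B} \cap P| \geq n$, and applying $\sigma$ produces at least $n$ points in $\bigcap \mathcal{B}' \cap P'$. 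The monochromatic preservation is immediate from the definition of $\mathcal{B}'_{j_i}$.

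The inequality $\tau_n'(\mathcal{B}_{j_i}|_P) \leq \tau_n'(\mathcal{B}'_{j_i}|_{P'})$ is the crux and rests on the extremality argument already developed for Lemma~\ref{claim induction reduction}. Because $j_i \notin \{u_1, \dots, u_t, v_1, \dots, v_t\}$, the inclusion $\pi_{i_\ell}(B_{u_\ell} \cap B_{v_\ell}) \subseteq \pi_{i_\ell}(B)$ holds for every $\ell \in [t]$ and every $B \in \mathcal{B}_{j_i}$, exactly as in the argument preceding~\eqref{equation other projections}. Hence, for any $Q \subseteq \bigcap \mathcal{B}(m)$ and any $B \in \mathcal{B}_{j_i}$, one has the implication $\sigma(Q) \subseteq \sigma(B) \Rightarrow Q \subseteq B$, since containment in the $i_\ell$-projections is automatic while containment in the remaining $d-t$ projections follows from $\sigma(Q) \subseteq \sigma(B)$. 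Therefore, pulling a minimum $n$-multitransversal $R' \subseteq P'$ of $\mathcal{B}'_{j_i}|_{P'}$ back via $(\sigma|_P)^{-1}$ yields an $n$-multitransversal of $\mathcal{B}_{j_i}|_P$ of the same cardinality. The main obstacle is the bookkeeping needed to guarantee that the extremality inclusion applies uniformly across \emph{all} of the remaining families $\mathcal{B}_{j_1}, \dots, \mathcal{B}_{j_{s-2t}}$ simultaneously; this is precisely why both the $u$- and $v$-selections are performed for every chosen coordinate before $\sigma$ is applied.
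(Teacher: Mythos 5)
Your proof is correct and follows exactly the route the paper intends: the paper gives no separate proof of this lemma, only the remark that it follows by ``essentially the same argument'' as Lemma~\ref{claim induction reduction}, and your write-up is precisely the first-inequality portion of that argument (the extremal $(u)$/$(v)$ selections and the implication~\eqref{equation other projections}) adapted from $d-1$ coordinates to $t$ of them, with the projection $\sigma$ onto the remaining $d-t$ coordinates. The points you flag --- injectivity of $\sigma$ on $P$ via the general-position assumption, and that the extremality pools always contain all of $j_1,\dots,j_{s-2t}$ --- are exactly the details that make the reduction go through.
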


\subsubsection{Reduction to Lemma~\ref{lemma on a measure}}

Our first step is to reduce the problem to the case when \eqref{equation assumption} holds. Under this assumption, we state Lemma~\ref{lemma on a measure} and explain how it leads to the proof of the theorem. The proof of the lemma is given in the next section.

Without loss of generality, assume that $\bigcap \pi_{i(m)}(\mathcal B_{j(m)}) =  \emptyset$ for any $m\in \mathcal M$. Indeed, otherwise, $\tau_n'(\mathcal B_{i(m)}|_P)\leq 2n$, which completes the proof of the theorem.  In particular, for each \( i \in I \), there exists \( j \in J \) with \( \bigcap  \pi_i(\mathcal{B}_j) = \emptyset \), which is unique by~\eqref{equation colorful Helly for R1}.

Suppose that there are distinct \( i, i' \in I \) and \( j \in J \) with \( \bigcap \pi_i(\mathcal{B}_j) = \bigcap  \pi_{i'}(\mathcal{B}_j) = \emptyset \). Choose any \( m \in \mathcal M \) with \( i(m) = i'\). Since the intersections $\bigcap \pi_{i(m)}(\mathcal{B}_{j(m)})=\bigcap \pi_{i'}(\mathcal{B}_{j(m)})$ and $\bigcap \pi_{i'}(\mathcal{B}_{j})$ are empty, \eqref{equation colorful Helly for R1} implies $j(m)=j$. However, by Remark~\ref{remark definition of u and Helly}, we get $j\in \{u_1,\dots, u_{d-1}\}$, that is, $j\ne j(m)$, a contradiction. Hence, we can assume that for each \( i \in I \), there exists a unique \( j \in J \) with \( \bigcap \pi_i(\mathcal{B}_j) = \emptyset \), and these \( j \)'s are distinct for distinct \( i \)'s.

Using the above assumptions and recalling that $I=[d]$ and $J=[2d-1]$, we can arrange the index sets $I$ and $J$ in such a way that 
\begin{equation}
\label{equation assumption}
\text{for any $i\in I, j\in J$,}\quad \bigcap \pi_i(\mathcal{B}_j) = \emptyset \text{\quad if and only if } j=i.
\end{equation}

In particular, it means that for any $m=((i_s,\varepsilon_s))_{s=1}^{d-1}\in \mathcal M$, we have that the indices $u_1,\dots, u_{d-1}$, $v_1,\dots,v_{d-1}\in J$ defined in Section~\ref{section notation} satisfy
\begin{equation}
\label{equation: sets I and J without I}
    u_s=i_s\text{ for all } s\in [d-1]\text{\quad and \quad} j(m)=i(m)\in I,
    \text{\quad hence, \quad}\{v_1,\dots, v_{d-1}\}=J\setminus I.
\end{equation}

To show that, we first note that by~\eqref{equation assumption}, for each $i\in I$, among all the boxes in the families $\mathcal B_j$, $j\in J$, the box with the largest left endpoint of its $i$-projection must belong to $\mathcal B_i$. (In this paragraph, we argue that this is true; note that exactly the same argument works if we replace `the largest left endpoint' by `the smallest right endpoint'.) Indeed, since $\bigcap \pi_i(\mathcal B_i)=\emptyset$, there are two boxes $B,B'\in \mathcal B_i$ with disjoint $i$-projections $\pi_i(B)$ and $\pi_i(B')$. (For example, it follows from the Helly theorem for $\mathbb R^1$.) Since any box $B''$ of the families $\mathcal B_j$, $j\in J\setminus \{i\}$, must intersect $B$ and $B'$, its $i$-projection must contain the open line segment between the disjoint segments $\pi_i(B)$ and $\pi_i(B')$. In particular, this implies that among the $i$-projections $\pi_i(B),\pi_i(B'),\pi_i(B'')$, the segment $\pi_i(B'')$ does not have the largest left endpoint, and so, the box with the largest left endpoint of its $i$-projection must belong to $\mathcal B_i$.

Given this observation, we can prove~\eqref{equation: sets I and J without I}. First, we show by induction that $i_s=u_s$ for any $s\in [d-1]$. Assume that it holds for all $s< s_0\in [d-1]$ and consider the case $s=s_0$. By the observation from the last paragraph, we have that the family $\mathcal B_{i_{s_0}}$ contains the box whose $i_s$-projection has the largest left endpoint (as well as the box whose $i_s$-projection has the smallest right endpoint) among all the boxes in the families $\mathcal B_j$, $j\in J\setminus \{i_1,\dots, i_{s_0}\}$. Hence, $u_{s_0}=i_{s_0}$.

To complete the proof of~\eqref{equation: sets I and J without I}, it remains to show $j(m)=i(m)$. Indeed, by the assumption given at the beginning of this section, we have that $\bigcap \pi_{i(m)}(\mathcal B_{j(m)})=\emptyset$. Hence,~\eqref{equation assumption} yields $j(m)=i(m)$.
\smallskip 

Finally, we are ready to state Lemma~\ref{lemma on a measure} under the assumption~\eqref{equation assumption}.
\begin{lemma}
\label{lemma on a measure}
    There exists $\mathcal M' \subseteq  \mathcal M$ of size $d(d-1)\cdot 2^{d-1}$ such that for any $p\in\mathbb R^d$, we have
    \[
        \Big| \big\{m \in  \mathcal M'  \mid  p \in \bigcap \mathcal B(m)\cap \Delta(m)   \big\} \Big|
        \leq
        \Big|\big\{j\in J\setminus I\mid p\in \bigcap \mathcal B_j\big\}\Big|.
    \]
\end{lemma}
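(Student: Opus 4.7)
My plan is to reduce the containment $p \in \bigcap \mathcal{B}(m) \cap \Delta(m)$ to combinatorial data attached to $p$, and then select $\mathcal{M}'$ so that a canonical projection is injective. For each $i \in I$, let $\ell_i > r_i$ be the largest left and smallest right endpoints in $\pi_i(\mathcal{B}_i)$ (strict by~\eqref{equation assumption}), and associate to every $p$ the signature $\varphi_p : I \to \{L,R,B\}$ that records whether $\pi_i(p) \ge \ell_i$, $\pi_i(p) \le r_i$, or $r_i < \pi_i(p) < \ell_i$. Set $B_p := \varphi_p^{-1}(B)$. Using that $u_s = i_s$ and that $B_{u_s}$ is the extremal box of $\mathcal{B}_{i_s}$ in direction $\varepsilon_s$, a direct inspection shows that $p \in \bigcap \mathcal{B}(m) \cap \Delta(m)$ forces $B_p = \{i(m)\}$, $\varepsilon_s = \varphi_p(i_s)$ for every $s$, and $p \in B_{v_s}$ for every $s$. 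Consequently the signature $(i(m), (\varepsilon_s)_s)$ is \emph{determined} by $p$, and only the ordering of the remaining indices in $\Sigma := I \setminus \{i(m)\}$ is free.

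The next step is the key witness claim: every qualifying $m$ satisfies $v_{d-1}(m) \in T$, where $T := \{j \in J \setminus I : p \in \bigcap \mathcal{B}_j\}$. Here I use that $B_{v_s}$ is the \emph{globally} extremal box of its $i_s$-projection among all boxes of the families available at step $s$, and that $v_{d-1}$ belongs to that available set at every step. Hence $p \in B_{v_s}$ already yields the correct extremal inequality between $\pi_{i_s}(p)$ and $\bigcap \pi_{i_s}(\mathcal{B}_{v_{d-1}})$ for each $s$. Combining with $\pi_{i(m)}(p) \in (r_{i(m)}, \ell_{i(m)}) \subseteq \pi_{i(m)}(B)$ for every $B \in \mathcal{B}_{v_{d-1}}$ --- a consequence of each such $B$ meeting both extremal $i(m)$-projection boxes of $\mathcal{B}_{i(m)}$ --- one obtains $p \in \bigcap \mathcal{B}_{v_{d-1}(m)}$, as required.

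Armed with the witness claim, I would now choose $\mathcal{M}'$ so that the map $\Phi : \mathcal{M}' \to I \times \{L,R\}^{d-1} \times (J \setminus I)$ sending $m \mapsto (i(m), (\varepsilon_s)_s, v_{d-1}(m))$ is injective. Since $|I \times \{L,R\}^{d-1} \times (J \setminus I)| = d(d-1) \cdot 2^{d-1}$ equals the prescribed $|\mathcal{M}'|$, an injective $\mathcal{M}'$ is automatically a bijection onto the target. The lemma then follows in one line: for any $p$, every qualifying $m \in \mathcal{M}'$ shares the fixed signature $(i(m), (\varepsilon_s)_s) = (i_0, \varphi_p|_\Sigma)$, so distinct qualifying $m$'s give distinct values $v_{d-1}(m) \in T$, and there are at most $|T|$ of them.

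The main obstacle is the construction of $\Phi$-distinct representatives for every triple $(i_0, \varepsilon, j)$. My approach is greedy: for a fixed triple, steer the first $d-2$ picks of the algorithm away from $j$ by a judicious choice of $i_1,\dots,i_{d-2} \in \Sigma$, possible as long as $j$ is not the unique extremal family in every remaining direction. When the geometry does force two different orderings to produce the same $v_{d-1}$, making perfect $\Phi$-injectivity unachievable, I would fall back on the argument that settles the cases $d \in \{2, 3\}$: the $B_{v_s}$-boxes attached to a colliding pair of orderings together supply \emph{two complete sets} of extremal inequalities on $p$ for two distinct families $v_s$, hence certify two distinct elements of $T$ and still keep the qualifying count bounded by $|T|$. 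Making this degenerate-case bookkeeping precise across all dimensions is the technical heart of the proof.
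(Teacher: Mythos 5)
Your overall architecture matches the paper's: you group sequences by underlying set, observe that for a fixed $p$ at most one underlying set can contribute (your signature $\varphi_p$ is exactly the paper's disjointness statement~\eqref{equation things are disjoint}), and then try to bound the number of qualifying orderings within a single underlying set by $|\{j \in J\setminus I : p \in \bigcap\mathcal B_j\}|$. Your ``witness claim'' that $p \in \bigcap\mathcal B(m)\cap\Delta(m)$ forces $p \in \bigcap\mathcal B_{v_{d-1}(m)}$ is correct: $v_{d-1}$ remains available at every step of the $v$-selection, so the extremality of $B_{v_s}$ gives $\pi_{i_s}(p)\le r_{i_s,v_{d-1}}$ (for $\varepsilon_s=L$), the extremality of $B_{u_s}$ gives the matching lower bound, and the $i(m)$-coordinate is handled by $\Delta(m)$ together with~\eqref{equation assumption}.

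The gap is in the step you yourself flag as ``the technical heart'': choosing the $d-1$ orderings per underlying set. Your primary plan --- making $m\mapsto(i(m),\varepsilon,v_{d-1}(m))$ injective --- is unachievable in general. Already for $d=3$ with $i(m)=3$: if $\mathcal B_{4}$ contains the box with the smallest right endpoint of the $1$-projection \emph{and} the box with the smallest right endpoint of the $2$-projection among all boxes of $\mathcal B_3\cup\mathcal B_4\cup\mathcal B_5$, then both orderings yield $v_1=4$, $v_2=5$, so $v_{d-1}$ is constant and the required $d-1$ distinct values do not exist. Your fallback (a colliding pair certifies two distinct elements of $T$) is only argued for $d\in\{2,3\}$; in higher dimensions it requires showing that for each coordinate direction some ordering in the colliding set still has the relevant family available at the step processing that direction, which is exactly the bookkeeping you defer. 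The paper closes this gap by fixing the orderings to be the $d-1$ cyclic shifts and replacing injectivity by a union bound: with $W_s=\{j\in J\setminus I \mid \pi_s(p)>r_{s,j}\}$, each qualifying shift $m_i$ forces $W_i\cup\dots\cup W_{i+s-1}\subseteq\{v_1,\dots,v_{s-1}\}$, and telescoping these inclusions around the cycle gives $|W_1\cup\dots\cup W_{d-1}|\le d-1-k$, i.e.\ $k\le|T|$, with no case analysis on degeneracies. Your witness claim is the special case $s=d-1$ of this inclusion; to complete the proof you need the full family of inclusions and the cyclic summation, not just the last one.
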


If $\mathcal M'$ is the subset from Lemma~\ref{lemma on a measure}, then we have
\begin{align*}
    \sum_{m \in \mathcal M'}\big|\bigcap \mathcal B(m)\cap \Delta(m) \cap P\big| 
     &\quad = \sum_{p\in P} \Big| \big\{m \in  \mathcal M'  \mid  p \in \bigcap \mathcal B(m) \cap \Delta(m) \big\}  \Big| &\text{(by double counting)}\\
    &\quad\leq\quad \sum_{p\in P} \Big|\big\{j\in J\setminus I\mid p\in \bigcap \mathcal B_j\big\}\Big| &\text{(by Lemma~\ref{lemma on a measure})}\\
    &\quad = \quad  \sum_{j\in J\setminus I}\Big| \bigcap \mathcal B_j \cap P \Big|. &\text{(by double counting)}
\end{align*} 

For each $j\in J\setminus I$, without loss of generality, we can additionally assume that 
\begin{equation*}
    \big|\bigcap \mathcal{B}_j \cap P\big| \leq n - 1.
\end{equation*}
Indeed, otherwise, for some $j\in J\setminus I$, the intersection $\bigcap \mathcal B_j$ contains at least $n$ points of $P$, and so, $\tau_n'(\mathcal B_j)=n$, which completes the proof of the theorem.

Hence, by $|J\setminus I|=d-1$, we conclude
\[
\sum_{m \in \mathcal M'}\big|\bigcap \mathcal B(m)\cap \Delta(m) \cap P\big| \leq  (d-1)(n-1).
\]
Since $|\mathcal M'|=d(d-1)\cdot 2^{d-1}$, there exist $m\in \mathcal M'$ with
\[
    \big|\mathcal B(m) \cap \Delta(m)\cap P \big| \leq \frac{n-1}{d\cdot 2^{d-1}},
\]
which combined with Claim~\ref{claim induction reduction} completes the proof of Theorem~\ref{quantitative colorful Halman 2d-1 colors}.

\subsubsection{Proof of Lemma~\ref{lemma on a measure}}
\label{subsection proof of the key lemma}

Our proof breaks naturally in two steps. To present them, first we clarify that by the underlying set of $((i_s,\varepsilon_s))_{s=1}^{d-1}\in \mathcal M$, we mean the set $\{(i_s,\varepsilon_s)\}_{s=1}^{d-1}$.

For each possible underlying set $T$ of sequences of $\mathcal M$, we find a subset $\mathcal M_T \subseteq \mathcal M$ consisting of $d-1$ sequences with the same underlying set $T$, such that for every $p \in \mathbb R^d$ we have
\begin{equation}
\label{equation final inequality for T}
\Big| \big\{m \in \mathcal M_T \mid p \in \bigcap \mathcal B(m)\cap \Delta(m)\big\} \Big|
\leq
\Big|\big\{j\in J\setminus I \mid p\in \bigcap \mathcal B_j\big\}\Big|.
\end{equation}

Apart from that, we show that for $m,m'\in \mathcal M$ with distinct underlying sets, 
\begin{equation}
\label{equation things are disjoint}
\big(\bigcap\mathcal B(m)\cap \Delta(m)\big) 
\cap 
\big(\bigcap\mathcal B(m')\cap \Delta(m')\big)=\emptyset.
\end{equation}

Given~\eqref{equation things are disjoint} and~\eqref{equation final inequality for T}, it easy to complete the proof of the lemma. Indeed, since there are $d\cdot 2^{d-1}$ possible underlying sets $T$ of sequences in $\mathcal M$, the set $\mathcal M':=\bigsqcup_{T}\mathcal M_T$ contains exactly $d(d-1)\cdot 2^{d-1}$ distinct sequences of $\mathcal M$. By~\eqref{equation things are disjoint}, among the sets $\{m \in  \mathcal M_T  \mid  p \in \bigcap \mathcal B(m)\cap \Delta(m)\}$ for different underlying $T$ and a fixed $p\in \mathbb R^d$, there is at most one nonempty. Combining this with~\eqref{equation final inequality for T}, we conclude the desired inequality.\smallskip

We start with proving~\eqref{equation things are disjoint}. For each $i\in I$ and $j\in J$, let $\ell_{i,j}$ and $r_{i,j}$ be the largest left and the smallest right endpoints, respectively, of the segments in the family $\pi_i(\mathcal B_j)$. By~\eqref{equation assumption}, for any $i\in I$ and $j\in J\setminus \{i\}$, we have 
\[
\ell_{i,j}<r_{i,i}<\ell_{i,i}<r_{i,j}.
\]
Using the notation from Section~\ref{section notation}, we have $r(m)=r_{i(m),j(m)}$ and $\ell(m)=\ell_{i(m),j(m)}$.

Let $m=\big((i_s,\varepsilon_s)\big)_{s=1}^{d-1}\in \mathcal M$. For each $i\in I$, the projection $\pi_i(\bigcap \mathcal B(m)\cap \Delta(m))$ belongs to one of three disjoint regions: the closed rays
\((-\infty, r_{i,i}] \) and \( [\ell_{i,i}, +\infty),\) and the open segment $(r_{i,i},\ell_{i,i})$. Indeed, if $i=i(m)$, then by~\eqref{equation: sets I and J without I} and the definition of $\Delta(m)$ in Section~\ref{section notation}, we have $i=i(m)=j(m)$ and
\begin{equation*}
    \pi_{i}(\Delta(m))=(r(m),\ell(m))=(r_{i,i}, \ell_{i,i}).
\end{equation*}
If there is $s\in [d-1]$ such that $i=i_s$, by~\eqref{equation: sets I and J without I}, we have $i_s=u_s=i$, and so,
\begin{equation*}
\label{equation cap B(m) projection}
    \pi_{i}(\bigcap \mathcal B(m)) \subset \pi_{i}(B_{u_s})=\pi_{i}(B_{i_s})\subseteq
    \begin{cases}
        [\ell_{i,i}, +\infty) \text{\quad if }\varepsilon_s=L\\
         (-\infty, r_{i,i}] \text{\quad if }\varepsilon_s=R.
 \end{cases}
\end{equation*}

Combining the above observations, we easily conclude~\eqref{equation things are disjoint}. Indeed, since $m, m' \in \mathcal M$ have distinct underlying sets, there are two possible cases. The first one is when $i(m) \neq i(m')$. In this case, note that $\pi_{i(m)} (\bigcap \mathcal B(m)\cap \Delta(m))$ is a subset of $ (r_{i(m),i(m)} , \ell_{i(m),i(m)})$ while $\pi_{i(m)} (\bigcap \mathcal B(m')\cap \Delta(m'))$ is a subset of one of the rays $(-\infty, r_{i,i}]$ or $[\ell_{i,i},+\infty)$. The second case is when there is $i\in I$ such that one of $(i,L)$ and $(i,R)$ belongs to $m$ and the other belongs to $m'$. Then note that $\pi_{i} (\bigcap \mathcal B(m)\cap \Delta(m))$ and $\pi_{i} (\bigcap \mathcal B(m')\cap \Delta(m'))$ are subsets of the disjoint rays $(-\infty, r_{i,i}]$ and $[\ell_{i,i},+\infty)$.\smallskip

We are left to prove~\eqref{equation final inequality for T} for any fixed $p\in \mathbb R^d$.  Without loss of generality, it is enough to find a desired set $\mathcal M_T$ of size $d-1$ for one specific $T$. We assume that $T=\big\{(i,L)\mid i\in I\setminus \{d\}\big\}.$ Moreover, we can assume that a fixed point $p\in \mathbb R^d$ satisfies the inequalities
\begin{equation}
    \label{equation inequalities for p}
    \pi_i(p)\geq \ell_{i,i}\text{\quad for any }i\in I\setminus \{d\}\text{\quad and } r_{d,d}<\pi_{d}(p)<\ell_{d,d}.
\end{equation}
Indeed, otherwise, according to the argument for~\eqref{equation things are disjoint} the set $\{m\in \mathcal M_T \mid p \in \bigcap \mathcal B(m)\cap \Delta(m)\big\}$ is empty, and so~\eqref{equation final inequality for T} holds.

From now on, we identify the set $[d-1]=I\setminus \{d\}$ with the cyclic group $\mathbb Z_{d-1}$. Consider the following $d-1$ sequences  
\[
m_i:=\big((s+i-1, L)\big)_{s=1}^{d-1}\in \mathcal M\text{\quad for }i\in \mathbb Z_{d-1}
\]
with the same underlying set and $i(m_i)=d$. To complete the proof, it is enough to verify the inequality
    \[
        \Big| \big\{i\in \mathbb Z_{d-1}  \mid  p \in \bigcap \mathcal B(m_i)\cap \Delta(m_i)   \big\} \Big|
        \leq
        \Big|\big\{j\in J\setminus I\mid p\in \bigcap \mathcal B_j\big\}\Big|.
    \]
for any $p$ satisfying~\eqref{equation inequalities for p}.

Let us choose any $i\in \mathbb Z_{d-1}$ and consider the sequence $v_1,\dots, v_{d-1}\in J\setminus I$ for $m_i\in \mathcal M_T$; see the definition in Section~\ref{section notation}.  Next, we prove the crucial property.

We claim that for any $j\in J\setminus I, s\in [d-1]$,
\[
\text{if }p\in \bigcap \mathcal B(m_i)\text{ and }    \pi_{i+s-1}(p) > r_{i+s-1,j} \text{,\quad then }j\in \{v_1,\dots, v_{s-1}\}.
\]
Indeed, by the definition of $v_s$ (for the sequence $m_i$), the point $r_{i+s-1,v_s}$ is the smallest element among $r_{i+s-1,h}$ for $h\in J\setminus (I\cup \{v_1,\dots, v_{s-1}\})$. Since $p\in \bigcap \mathcal B(m_i) \subseteq B_{v_s}$, we have $r_{i+s-1,v_s}\geq \pi_{i+s-1}(p)>r_{i+s-1,j}$, which means that $j\not \in  J\setminus (I\cup \{v_1,\dots, v_{s-1}\})$. Therefore, $j\in \{v_1,\dots, v_{s-1}\}$.

By letting
\[
    W_s:=\big\{j\in J\setminus I\mid \pi_s(p)>r_{s,j}\big\},
\]
we conclude that for any $s\in [d-1]$, 
\[
    \text{if }p\in \bigcap \mathcal B(m_i),\text{\quad then } W_i\cup \dots\cup W_{i+s-1}\subseteq \{v_1,\dots, v_{s-1}\}.
\]
Hence 
\[
|W_i\cup \dots \cup W_{i+s-1}|\leq s-1.
\]

Suppose the set $\{i\in \mathbb Z_{d-1}\mid p\in \bigcap \mathcal B(m_i)\}$ has size $k$ and let $1\leq i_1<\dots<i_k\leq d-1$ integers corresponding to its elements. Next, we apply the above inequality for $i=i_t$ and $s=i_{t+1}-i_t$ if $1\leq t\leq k-1$ and for $i=i_k$ and $s=i_1+d-1-i_{k}$. Summing up these $k$ inequalities, we obtain
\begin{align*}
    |W_1\cup\dots \cup W_{d-1}|&\leq \sum_{t=1}^{k-1} |W_{i_t}\cup\dots W_{i_{t+1}-1}|+|W_{i_k}\cup\dots\cup W_{d-1}\cup W_1\cup \dots \cup W_{i_1+d-2}|\\
    &\leq \sum_{t=1}^{k-1} (i_{t+1}-i_t-1)+(i_1+d-1-i_k-1)\\
    &=d-1-k\\
    &=d-1-\big|\{i\in\mathbb Z_{d-1}\mid p\in \bigcap \mathcal B(m_i)\}\big|,
\end{align*}
which implies that
\begin{align*}
\big|\{i\in\mathbb Z_{d-1}\mid p\in \bigcap \mathcal B(m_i)\cap \Delta(m_i)\}\big|&=\big|\{i\in\mathbb Z_{d-1}\mid p\in \bigcap \mathcal B(m_i)\}\big|&\text{(by \eqref{equation inequalities for p})}
\\&\leq d-1-\big|\big\{j\in J\setminus I\mid\pi_s(p)> r_{s,j}\text{ for some }s \in [d-1] \big\}\big|
\\&=d-1-\big|\{j\in J\setminus I \mid p\not\in \bigcap \mathcal B_j \big\}\big|&\text{(by \eqref{equation inequalities for p})}\\
&=\big|\{j\in J\setminus I\mid p\in \bigcap \mathcal B_j \big\}\big|.
\end{align*}
This completes the proof of the lemma.

\subsubsection{Proof of the lower bound}\label{subsection: construction of lower bound}
Let us recall the statement about the lower bound. 

\begin{claim}
\label{claim: example}
For any \( n \geq 1 \) and \( d \geq 2 \), there exists a finite set \( P \subset \mathbb{R}^d \) and finite families \( \mathcal{B}_1, \dots, \mathcal{B}_{2d-1} \) of boxes in \( \mathbb{R}^d \) such that for any their transversal \( \mathcal{B} \), its trace \( \mathcal{B}|_P \) is \( n \)-intersecting and satisfies \( \tau_n'(\mathcal{B}_j|_P) = 2n \) for any \( j \in [2d-1] \).
\end{claim}

\begin{proof}
The proof is organized as follows. We outline the structure of the example and state a property ensuring the claim. We then describe the example in detail, verify the property, and complete the proof.\smallskip

For each $i \in [d]$, the family $\mathcal B_i$ consists of two large boxes separated by the hyperplane 
\(\{(x_1,\dots, x_d) \in \mathbb R^d : x_i = 0\}\).
For simplicity, the reader may assume that one of the boxes is the open half-space defined by $x_i > 0$ and the other is the open half-space defined by $x_i < 0$. Note that for each transversal $\mathcal B'$ for $\mathcal B_1,\dots,\mathcal B_d$, the intersection $\bigcap \mathcal B'$  is a box lying in the corresponding open orthant determined by the coordinate hyperplanes.
 
For each $i \in [2d-1] \setminus [d]$, the family $\mathcal B_i$ consists of two boxes such that for each transversal $\mathcal B'$ for $\mathcal B_1,\dots, \mathcal B_d$ and for each transversal $\mathcal B''$ for $\mathcal B_{d+1},\dots, \mathcal B_{2d-1}$, the intersection $\bigcap \mathcal B'$ contains exactly one vertex of $\bigcap \mathcal B''$. In particular, each box from $\mathcal B_{d+1}\cup \dots\cup \mathcal B_{2d-1}$ contains the origin in its interior. Also, we assume that the boxes in the families $\mathcal B_{d+1},\dots, \mathcal B_{2d-1}$ are distinct.

We define $P$ as the multiset consisting of $n$ copies of the set \[
\bigcup_{\mathcal B''} \mathrm{vert}(\bigcap \mathcal B''),
\]
where the union is taken over all transversals $\mathcal B''$ for $\mathcal B_{d+1},\dots,\mathcal B_{2d-1}$, and $\mathrm{vert}(B)$ denotes the vertex set of a box $B\subset\mathbb R^d$.
\smallskip

Our goal is to find an example of families $\mathcal B_1,\dots, \mathcal B_{2d-1}$ satisfying the above description such that \textit{for any transversal $\mathcal B''$ for $\mathcal B_{d+1},\dots, \mathcal B_{2d-1}$, each vertex of the box $\bigcap \mathcal B''$ lies outside of each of the boxes in $(\mathcal B_{d+1}\cup \dots \cup \mathcal B_{2d-1}) \setminus \mathcal B''$.} We claim that if this property holds, then the obtained example satisfies the claim.
\smallskip 

Indeed, any transversal $\mathcal B$ for $\mathcal B_1,\dots, \mathcal B_{2d-1}$ is the union of the transversal $\mathcal B'$ for $\mathcal B_1,\dots, \mathcal B_d$ and the transversal $\mathcal B''$ for $\mathcal B_{d+1},\dots, \mathcal B_{2d-1}$. Note that exactly one vertex of $\bigcap \mathcal B''$, which corresponds to the $n$ points of $P$, lies in $\bigcap \mathcal B'$. Hence, $\bigcap \mathcal B=(\bigcap \mathcal B')\cap (\bigcap \mathcal B'')$ contains exactly $n$ points of $P$.

Let us show that $\tau_n'(\mathcal B_i) = 2n$ for all $i \in [2d-1]$. 
For each $i \in [d]$, the two boxes in $\mathcal B_i$ are disjoint, and thus $\tau_n'(\mathcal B_i) \geq 2n$. 
For $i \in [2d-1] \setminus [d]$, by the assumed property and the construction of $P$, the two boxes of $\mathcal B_i$ do not share any point of $P$, and so $\tau_n'(\mathcal B_i) \geq 2n$.
Since each family $\mathcal B_i$ contains exactly 2 boxes, we conclude that $\tau_n'(\mathcal B_i) \leq 2n$, which completes the verification that Claim~\ref{claim: example} holds for families satisfying the property.
\medskip

Next, we provide a concrete example of families satisfying the above property. For each $i \in [d]$, let $\mathcal B_i =  \{ B_{i,-1} ,B_{i,+1}\}$, where
\[
\begin{aligned}
    B_{1,-1} &= [-N, -\delta] \times [-N, +N] \times \dots \times [-N, +N], \\
    B_{1,+1} &= [+\delta, +N] \times  [-N, +N] \times \dots \times [-N, +N], \\
    \vdots \\
    B_{d,-1} &= [-N, +N] \times \dots \times [-N, +N] \times [-N, -\delta] , \\
    B_{d,+1} &=  [-N, +N] \times \dots \times [-N, +N] \times [+\delta, +N],
\end{aligned}
\] with $N\geq 2d$ and $0<\delta<1$. For simplicity, the reader can assume that $N$ is $+\infty$ and $\delta$ is a positive constant very close to $0$.

For families $\mathcal B_{d+1},\dots, \mathcal B_{2d-1}$, we introduce some auxiliary notions. For \( i \in [d-1] \) and \( k \in [d] \), define \( q_1^i(k) \) and \( q_2^i(k) \) as follows. For \( i \in [d-1]  \), let
\[
q_1^i(k) =
\begin{cases}
d, & k \in [d]\setminus \{d-i+1\}; \\
1,   & k = d-i+1,
\end{cases}
\qquad
q_2^i(k) =
\begin{cases}
d-i,   & k \in \{1,\dots,d-i\}; \\
d, & k \in \{d-i+1,\dots, d-1\}.
\end{cases}
\]  

Using the above notions, let use define the boxes of the families $\mathcal{B}_{d+i} = \{B_{d+i,1}, B_{d+i,2}\}$ for each \( i \in [d-1] \) as follows. Put
\[
    B_{d+i,j} = \big[-q_j^i(1),\, +q_j^i(1)\big] \times \dots \times \big[-q_j^i(d),\, +q_j^i(d)\big] \text{ \quad for any }i\in[d-1], j\in \{1,2\}. 
\]  
One can easily verify that for any transversal $\mathcal B'$ for $\mathcal B_1,\dots, \mathcal B_d$ and any box $B$ from $\mathcal B_{d+1},\dots, \mathcal B_{2d-1}$, only one vertex of $B$ belongs to $\bigcap \mathcal B'$, that is, the families are constructed as described at the beginning of the proof.

For each \( \varepsilon = (\varepsilon_1, \dots, \varepsilon_d) \in \{\pm 1\}^d \) and \( s = (s_1, \dots, s_{d-1}) \in \{1,2\}^{d-1} \), the point  
\[
p_s^\varepsilon=\Big( \varepsilon_1 \min_{i \in [d-1]} q_{s_i}^i(1), \, \dots, \, \varepsilon_d \min_{i \in [d-1]} q_{s_i}^i(d) \Big),
\] 
which is a vertex of $B_{d+1,s_1}\cap \dots \cap B_{2d-1,s_{d-1}}$, which lies in $B_{1,\varepsilon_1}\cap \dots \cap B_{d,\varepsilon_d}$.

The property we aim to verify asserts that for any $\varepsilon\in \{\pm 1\}^d$, any $s\in \{1,2\}^{d-1}$, and $i\in [d-1]$, the point $p_s^\varepsilon$ does not belong to the box $B_{d+i, 3-s_i}$. By symmetry, it suffices to assume that $\varepsilon=(+1,\dots, +1)$. Using the notion
\[
    p_s(k)=\min_{i\in[d-1]} q^i_{s_i}(k) \text{\quad for each }s=(s_1,\dots,s_{d-1})\in\{1,2\}^{d-1}, k\in [d],
\]
we can rephrase the property as follows. \textit{For each $s=(s_1,\dots, s_{d-1})\in \{1,2\}^{d-1}$ and $i\in[d-1]$, there is $k\in[d]$ such that 
\[ 
    p_s(k)> q^i_{3-s_i}(k).
\]}

Let us verify this property for any fixed $s\in\{1,2\}^{d-1}$ and $i\in[d-1]$. If $s_{i'}=2$ for some $i' \in [d-1]\setminus [i-1]$, let $i_0$ be the smallest such index. If there is no such $i'$, set $i_0=d$. By choosing $k=d-i_0+1$, we have
\[ 
\begin{aligned}
p_s(k) &\geq \min \big\{
 q_1^{j_1}(k),q_2^{j_2}(k) \mid j_1 \in [d-1]\setminus \{i_0\},  j_2 \in \{1,\dots, i-1,i_0,\dots, d-1\} \big\}  \\
&=\min  \big\{d, \dots, d; d-1,\dots,d-(i-1); d,\dots,d\big\}= d-i+1.
\end{aligned}
 \]
We have two cases. If $s_i=1$, then $q_2^i(d-i_0+1) = d-i < d-i+1 \leq p_s(d-i_0+1)$. If $s_i=2$, that is, $i_0=i$, then $q_1^i(d-i+1) = 1 < d-i+1 \leq p_s(d-i+1)$.

\end{proof}

\subsection{Quantitative Halman theorem for subfamilies of size $2d-1$}\label{section: monochromatic version}
\begin{theorem}\label{monochromatic version}
Let \( n \geq 1 \) and \( d \geq 2 \) be integers. Let $P\subset \mathbb R^d$ be a finite set, and let $\mathcal B$ be a finite family of boxes in $\mathbb R^d$. If for every subfamily $\mathcal B' \subseteq \mathcal B$ of size $2d-1$, its trace $\mathcal B'|_P$ is \( n \)-intersecting, then $\tau_n'(\mathcal B|_P)\leq 2n$.
\end{theorem}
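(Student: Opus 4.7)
The plan is to deduce Theorem~\ref{monochromatic version} directly from Lemma~\ref{claim induction reduction} by applying it with the monochromatic assignment $\mathcal B_1=\cdots=\mathcal B_{2d-1}=\mathcal B$. The case $|\mathcal B|\leq 2d-1$ is trivial: either the whole family $\mathcal B$ is itself (or is contained in) a subfamily of size $2d-1$ whose trace is $n$-intersecting, giving $|\bigcap \mathcal B\cap P|\geq n$ and hence $\tau_n'(\mathcal B|_P)\leq n$. So I would assume $|\mathcal B|\geq 2d-1$. Under this assumption, the intersection of any transversal of $\mathcal B_1,\dots,\mathcal B_{2d-1}$ equals the intersection of its underlying subset of $\mathcal B$, a set of size at most $2d-1$; extending this subset arbitrarily to a $(2d-1)$-subfamily of $\mathcal B$ and invoking the hypothesis, the intersection contains at least $n$ points of $P$. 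Therefore the transversal hypothesis of Theorem~\ref{quantitative colorful Halman 2d-1 colors} holds, so the notation of Section~\ref{section notation} and Lemma~\ref{claim induction reduction} is available.

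The crucial observation is that the hypothesis forces $\bigcap \pi_i(\mathcal B)\neq \emptyset$ for every $i\in [d]$. Indeed, given any two boxes $B,B'\in \mathcal B$, since $d\geq 2$ we have $2\leq 2d-1$, so $\{B,B'\}$ extends to a $(2d-1)$-subfamily whose intersection is nonempty by hypothesis; in particular $B\cap B'\neq \emptyset$, and so $\pi_i(B)\cap \pi_i(B')\neq \emptyset$. Thus the family $\pi_i(\mathcal B)$ of segments on the real line is pairwise intersecting, and Helly's theorem on the line (Helly number $2$ for intervals) yields $\bigcap \pi_i(\mathcal B)\neq \emptyset$.

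Now fix any $m\in \mathcal M$. Since $\mathcal B_{j(m)}=\mathcal B$, the previous step gives $\bigcap \pi_{i(m)}(\mathcal B_{j(m)})\neq \emptyset$, and the definition of $\Delta(m)$ in Section~\ref{section notation} forces $\Delta(m)=\emptyset$. Lemma~\ref{claim induction reduction} then yields
\[
\tau_n'(\mathcal B|_P)\;=\;\tau_n'(\mathcal B_{j(m)}|_P)\;\leq\; 2n + \big|\bigcap \mathcal B(m)\cap \Delta(m)\cap P\big|\;=\;2n,
\]
as required. The only substantive step is the derivation of pairwise intersection of $\mathcal B$ from the hypothesis, which in turn rests on the elementary inequality $2\leq 2d-1$; once pairwise intersection is in hand, the ``slack term'' $|\bigcap \mathcal B(m)\cap \Delta(m)\cap P|$ that created the extra $\lfloor (n-1)/(d\cdot 2^{d-1})\rfloor$ in the colorful case collapses to zero, and no averaging or new inductive machinery is needed beyond what was developed for Theorem~\ref{quantitative colorful Halman 2d-1 colors}.
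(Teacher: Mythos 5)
Your proposal is correct and follows essentially the same route as the paper: set $\mathcal B_1=\cdots=\mathcal B_{2d-1}=\mathcal B$, observe that the hypothesis forces pairwise intersection of the boxes and hence $\bigcap\pi_{i(m)}(\mathcal B_{j(m)})\neq\emptyset$, so $\Delta(m)=\emptyset$, and conclude via Lemma~\ref{claim induction reduction}. The only cosmetic difference is that the paper deduces $\bigcap\mathcal B_{j(m)}\neq\emptyset$ from pairwise intersection of boxes before projecting, while you apply the one-dimensional Helly theorem directly to the projections; both yield the same vanishing of the slack term.
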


\begin{proof}
By setting $\mathcal B_1 = \dots = \mathcal B_{2d-1}  = \mathcal B$, we may apply Theorem~\ref{quantitative colorful Halman 2d-1 colors}. However, to obtain a tighter upper bound, we need to slightly adjust the proof of this theorem. 

By Lemma~\ref{claim induction reduction}, for any $m\in \mathcal M$, we have 
\[
\tau_n'(\mathcal B|_P) =\tau_n'(\mathcal B_{j(m)}|_P) \leq 2n + \big| \bigcap \mathcal B(m)\cap \Delta(m)\cap P\big|.
\]
Since any two boxes of $\mathcal B$ intersects, we have $\bigcap \mathcal B_{j(m)}\ne \emptyset$ for any $m$. Hence $\bigcap \pi_{i(m)}(\mathcal B_{j(m)})\ne \emptyset$, and by definition of $\Delta(m)$, we conclude that $\Delta(m) = \emptyset$ for any $m \in \mathcal M$. This completes the proof of the theorem.
\end{proof}

The following example shows that for any $n\geq 2$ and $d\geq 1$, in $\mathbb R^d$, there are a finite set $P$ and a family $\mathcal B$ of boxes such that for any $\mathcal B'\subseteq \mathcal B$ of size at most $2d-1$, we have $\mathcal B|_P$ is $n$-intersecting, and $\tau_n'(\mathcal B|_P) \geq  \frac{2d}{2d-1} n $.

\begin{example} 
\label{example lower bound for monochromatic version}
Assume that $\mathcal B=\{B_1,\dots, B_{2d}\}$ is a family of boxes in $\mathbb R^d$ and $P=\{p_1,\dots, p_{2dn}\}$ is a multiset of points in $\mathbb R^d$, with elements defined as follows:
\[
\begin{aligned}
B_1 &= [-1,0] \times [-1,1] \times \dots \times [-1,1], 
&\quad p_1 = \dots = p_n = (1,0,\dots,0); \\
B_2 &= [0,1] \times [-1,1] \times \dots \times [-1,1], 
&\quad p_{n+1} = \dots = p_{2n} = (-1,0,\dots,0); \\
&\vdots \\[6pt]
B_{2d-1} &= [-1,1] \times \dots \times [-1,1] \times [-1,0], 
&\quad p_{(2d-2)n+1} = \dots = p_{(2d-1)n} = (0,\dots,0,1); \\
B_{2d} &= [-1,1] \times \dots \times [-1,1] \times [0,1], 
&\quad p_{(2d-1)n+1} = \dots = p_{2dn} = (0,\dots,0,-1).
\end{aligned}
\]

For any $i \in [2d]$ and $j\in [2d]\setminus \{i\}$, points $p_{(i-1)n+1}, \dots,  p_{in}$ lie in box $B_{j}$. Hence, the intersection of any $2d-1$ boxes of $\mathcal B$ contains exactly $n$ points of $P$. 

Each $n$-piercing multiset of $\mathcal B$ can be represented as the union of multisets $P_1 \cup \dots \cup P_{2d}$, where $P_i \subseteq \{p_{(i-1)n+1}, \dots, p_{in}\}$. Hence, to find $\tau_n'(\mathcal B|_P)$, it is enough to solve the following integer program
\begin{align*}
   & \tau_n'(\mathcal B|_P) = \min \sum_{i \in [2d]} |P_i|, \\
   & \text{subject to } \sum_{j \in [2d] \setminus \{i\}} |P_j| \geq n \text{\quad and } |P_i|\in \{0,1,\dots, n\}\quad \text{for all } i \in [2d].
\end{align*}
A standard argument shows that $\tau_n'(\mathcal B|_P) \geq   \frac{2d}{2d-1} n $.
\end{example}

\section{$(p,q)$-type Halman theorem}\label{section pq Halman theorem}
Our proof of Theorem~\ref{pq Halman theorem for q at least 2} closely follows the proof of Theorem~1.10 in \cite{keller2020p} of Keller and Smorodinsky, with suitable modifications to handle the present setting.

\begin{proof}[Proof of Theorem~\ref{pq Halman theorem for q at least 2}]
Recall the classical Ramsey theorem \cite{ramsey1930problem} and Theorem 1.3 from \cite{eom2025fractionaldiscretehellypairs}.

\begin{theorem}\label{ramsey}
    Given positive integers $s,n$, there exists $r(s,n)$, which is the minimum positive integer, such that any graph on $r(s,n)$ vertices contains a complete graph on $s$ vertices or an independent set on $n$ vertices.
\end{theorem}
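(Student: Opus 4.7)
The plan is to prove the existence of $r(s,n)$ by double induction on $s+n$, simultaneously establishing the well-known recursive inequality $r(s,n) \leq r(s-1,n) + r(s,n-1)$. The base cases handle $s=1$ or $n=1$: a single vertex is both a complete graph on one vertex and an independent set of size one, so $r(1,n) = r(s,1) = 1$. This anchors the induction.

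For the inductive step with $s,n \geq 2$, I would set $N := r(s-1,n) + r(s,n-1)$, which exists by the inductive hypothesis, and consider an arbitrary graph $G$ on $N$ vertices. Fix any vertex $v \in V(G)$ and partition the remaining $N-1$ vertices into $A$, the set of neighbors of $v$, and $B$, the set of non-neighbors of $v$. Since $|A| + |B| = N - 1 = r(s-1,n) + r(s,n-1) - 1$, the pigeonhole principle forces either $|A| \geq r(s-1,n)$ or $|B| \geq r(s,n-1)$.

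In the first case, the induced subgraph on $A$ either contains an independent set of size $n$, which is already what we want in $G$, or it contains a complete graph on $s-1$ vertices; adjoining $v$ to such a $K_{s-1}$ produces a $K_s$ in $G$ since $v$ is adjacent to every vertex of $A$. The second case is symmetric: the induced subgraph on $B$ either contains a $K_s$ directly, or an independent set of size $n-1$ which, together with $v$, forms an independent set of size $n$ in $G$ since $v$ is non-adjacent to every vertex of $B$. In every case we locate the desired substructure, so $N$ many vertices suffice. Existence of $r(s,n)$ as the minimum such integer then follows by the well-ordering principle on $\mathbb{N}$.

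There is no real obstacle here, as this is the classical Erd\H{o}s--Szekeres double induction. The only subtle point is setting up the two inductive hypotheses cleanly and ruling out the degenerate base cases. Since the statement only asks for \emph{existence} of $r(s,n)$ rather than a sharp bound (such as the by-product estimate $r(s,n) \leq \binom{s+n-2}{s-1}$ that the same argument yields), no further care is needed beyond the recursive inequality above.
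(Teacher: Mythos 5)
Your proof is correct. Note, however, that the paper does not prove this statement at all: Theorem~\ref{ramsey} is recalled as a classical result with a citation to Ramsey's 1930 paper, and is used as a black box in the proof of Theorem~\ref{pq Halman theorem for q at least 2}. So there is no internal argument to compare against; what you have supplied is the standard Erd\H{o}s--Szekeres double induction, and it is complete. The base cases $r(1,n)=r(s,1)=1$ are handled correctly, the pigeonhole step from $|A|+|B|=r(s-1,n)+r(s,n-1)-1$ is valid, and both branches of the case analysis (adjoining $v$ to a $K_{s-1}$ in its neighborhood, or to an independent set of size $n-1$ among its non-neighbors) are sound. Your argument in fact gives more than the cited statement requires, namely the quantitative bound $r(s,n)\leq \binom{s+n-2}{s-1}$, whereas the paper only needs bare existence of $r(p,p)$ to feed into the $(r(p,p),d+1)$-property argument.
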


\begin{theorem}[Theorem 1.3 in \cite{eom2025fractionaldiscretehellypairs}]\label{leeom}
    There exists a function $M: \mathbb N \to \mathbb N$ such that the following holds. For any finite set $ P \subseteq \mathbb R^d$ and any family $\mathcal B$ of boxes in $\mathbb R^d$ with $|\mathcal B| = M(d)$, if  $\mathcal B|_P$ has $(2,2)$-property, then there exists a subfamily $\mathcal B' \subseteq \mathcal B$ of size $d+1$ such that its trace $\mathcal B'|_P$ is intersecting.
\end{theorem}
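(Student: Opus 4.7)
The plan is to establish this fractional-Helly-pair statement by combining Ramsey's theorem (Theorem~\ref{ramsey}, applied iteratively to handle many colors) with induction on the dimension $d$. The base case $d=1$ is immediate, since for intervals on the line the $(2,2)$-property on the trace directly yields two intervals sharing a point of $P$, so $M(1)=2$ works.

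For the inductive step, first fix a witness $p_{ij}\in B_i\cap B_j\cap P$ for each pair of boxes. The $(2,2)$-property forces the boxes to be pairwise intersecting in $\mathbb R^d$; since axis-aligned boxes pairwise intersect if and only if their coordinate projections do, a coordinate-by-coordinate application of Helly's theorem on the line gives that $B^\star:=\bigcap_{B\in\mathcal B} B$ is a nonempty box, with $\pi_k(B^\star)=[a^k,b^k]$ for each $k$; however $B^\star$ need not meet $P$. Color each pair $(i,j)$ with a signature $\sigma(i,j)\in\{-,0,+\}^d$ whose $k$th entry records whether $\pi_k(p_{ij})$ lies below $a^k$, inside $[a^k,b^k]$, or above $b^k$; this uses at most $3^d$ colors. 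Taking $M(d)$ to be a multicolor Ramsey number large enough to force a monochromatic clique $\mathcal B'\subseteq\mathcal B$ of size $\max\{d+1,\,M(d-1)+1\}$ then yields such a clique with a common signature $\sigma$.

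If $\sigma=(0,\dots,0)$, every witness $p_{ij}$ arising from pairs in $\mathcal B'$ lies in $B^\star$, and hence in \emph{every} box of $\mathcal B$. Any choice of $d+1$ boxes in $\mathcal B'$, together with this witness, then forms the required $(d+1)$-wise trace-intersecting subfamily. Otherwise some coordinate $k$ has $\sigma_k\ne 0$; by symmetry assume $\sigma_k=+$, so every witness indexed by $\mathcal B'$ satisfies $\pi_k(p_{ij})>b^k$. The plan is then to project the boxes in $\mathcal B'$ and their witnesses onto $\mathbb R^{d-1}$ by discarding coordinate $k$, observe that the projected family still has the $(2,2)$-property on the projected point set, and apply the induction hypothesis to extract $d$ boxes whose $(d-1)$-dimensional trace is intersecting.

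The main obstacle will be the lifting step: a point returned by the inductive statement in $\mathbb R^{d-1}$ must actually lift to a point of $P$ that lies in the $k$-projection of each of the $d$ chosen boxes, and the resulting $d$-wise intersection must be extended to a $(d+1)$-wise intersection. I plan to handle this by refining the coloring to record a bounded amount of additional combinatorial data --- for instance which box of a pair attains the smaller upper $k$-endpoint and which attains the larger lower $k$-endpoint --- so that the monochromatic clique is forced into a consistent extremal order in coordinate $k$. This uniformity should allow one to choose the extra box as one that is $k$-extremal across the clique, guaranteeing that its $k$-projection contains the lifted witness. Because the refined coloring still uses only a bounded (finite) number of colors, the Ramsey bound keeps $M(d)$ finite and well-defined, closing the induction.
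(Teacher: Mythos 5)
First, a point of reference: the paper does not prove this statement at all --- it is imported verbatim as Theorem~1.3 of the cited work \cite{eom2025fractionaldiscretehellypairs} and used as a black box in the proof of Theorem~\ref{pq Halman theorem for q at least 2} --- so there is no in-paper proof to compare against, and your argument must stand on its own. It does not yet: the gap is exactly the lifting step you flag yourself, and your sketched repair does not close it. The skeleton before that point is sound (the base case, the fact that the $(2,2)$-property forces pairwise intersection and hence $B^\star\neq\emptyset$ by the Helly number $2$ of boxes, and the case $\sigma=(0,\dots,0)$). But in the case $\sigma_k=+$, after projecting away coordinate $k$ and invoking the inductive hypothesis, the induction hands you $d$ boxes $B_{m_1},\dots,B_{m_d}$ of the clique and \emph{some} witness $p_{ij}$ whose projection lies in all $d$ projected boxes; to finish you need $\pi_k(p_{ij})\in\pi_k(B_{m_l})$ for every $l$, and then still a $(d+1)$-st box. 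Your fix (record which box of a pair attains the smaller upper $k$-endpoint and which the larger lower $k$-endpoint, then adjoin a $k$-extremal box) only controls the \emph{adjoined} box. The inductive call is a black box for dimension $d-1$: you cannot steer which $d$ boxes or which witness it returns, and nothing forces that witness to be $k$-compatible with those boxes. For instance, if along the clique the lower endpoints $\alpha^k_m$ of $\pi_k(B_m)$ increase, a witness $p_{ij}$ can satisfy $\pi_k(p_{ij})<\alpha^k_m$ for every $m$ after $j$ in the clique order, and the inductive call may return precisely such boxes; the consistent extremal order does not by itself exclude this.

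The good news is that your refined-coloring idea \emph{does} work, provided you apply it to all $d$ coordinates simultaneously and drop the projection and the dimension induction entirely. Fix a linear order on the boxes, write $\pi_k(B_i)=[\alpha_i^k,\beta_i^k]$, and color each pair $i<j$ by the vector, over $k\in[d]$, of the two comparison outcomes: whether $\alpha_i^k$ is smaller than, equal to, or larger than $\alpha_j^k$, and likewise for $\beta_i^k$ versus $\beta_j^k$ (at most $9^d$ colors). Let $M(d)$ be a multicolor Ramsey number large enough to force a monochromatic clique $B_{i_1},\dots,B_{i_{d+1}}$ with $i_1<\dots<i_{d+1}$. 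Monochromaticity means that along the clique each sequence $(\alpha^k_{i_l})_{l}$ and $(\beta^k_{i_l})_{l}$ is weakly monotone, so its maximum, respectively minimum, is attained at $l=1$ or $l=d+1$. Now take the single witness $p:=p_{i_1 i_{d+1}}\in B_{i_1}\cap B_{i_{d+1}}\cap P$. For every $k$ we have $\pi_k(p)\geq\max\big(\alpha^k_{i_1},\alpha^k_{i_{d+1}}\big)=\max_l\alpha^k_{i_l}$ and $\pi_k(p)\leq\min\big(\beta^k_{i_1},\beta^k_{i_{d+1}}\big)=\min_l\beta^k_{i_l}$, hence $p\in B_{i_l}$ for every $l\in[d+1]$. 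Thus the entire clique's trace is intersecting, which is stronger than required; the intersection $B^\star$, the signature in $\{-,0,+\}^d$, the base case, and the induction on $d$ all become unnecessary. I recommend rewriting the proof along these lines.
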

    
Without loss of generality, we can assume that the family $\mathcal B|_P$ has $(p,2)$-property, where $p\geq M(d)$, because all other cases follow from this case. If we show that $\mathcal B|_P$ has the $(r(p,p), d+1)$ property, we can apply Theorem~\ref{pq Halman theorem for q at least d plus 1} and complete the proof.

Consider a graph, whose vertices are subsets in $\mathcal B|_P$ and edges are pairs of intersecting subsets. To prove $(r(p,p), d+1)$-property of $\mathcal B$, consider the induced subgraph on any subfamily $\mathcal B'\subseteq \mathcal B$ of size $r(p,p)$. Since $\mathcal B'|_P$ has $(p,2)$-property, there are no independent subsets of size $p$. By the Ramsey theorem, there is a clique of size $p$, that is, $\mathcal B'|_P$ contains $p$ pairwise intersecting sets. Therefore, there is $\mathcal B''\subset \mathcal B'$ of size $p$ such that $\mathcal B''|_P$ has $(2,2)$-property. By Theorem~\ref{leeom}, there is a subfamily $\mathcal B'''\subseteq \mathcal B''$ of size $d+1$ such that its trace $\bigcap \mathcal B'''|_P$ is intersecting, which completes the proof.
\end{proof}
\begin{example}    

\begin{figure}[!h]
    \centering
    
    \includegraphics[scale=0.5]{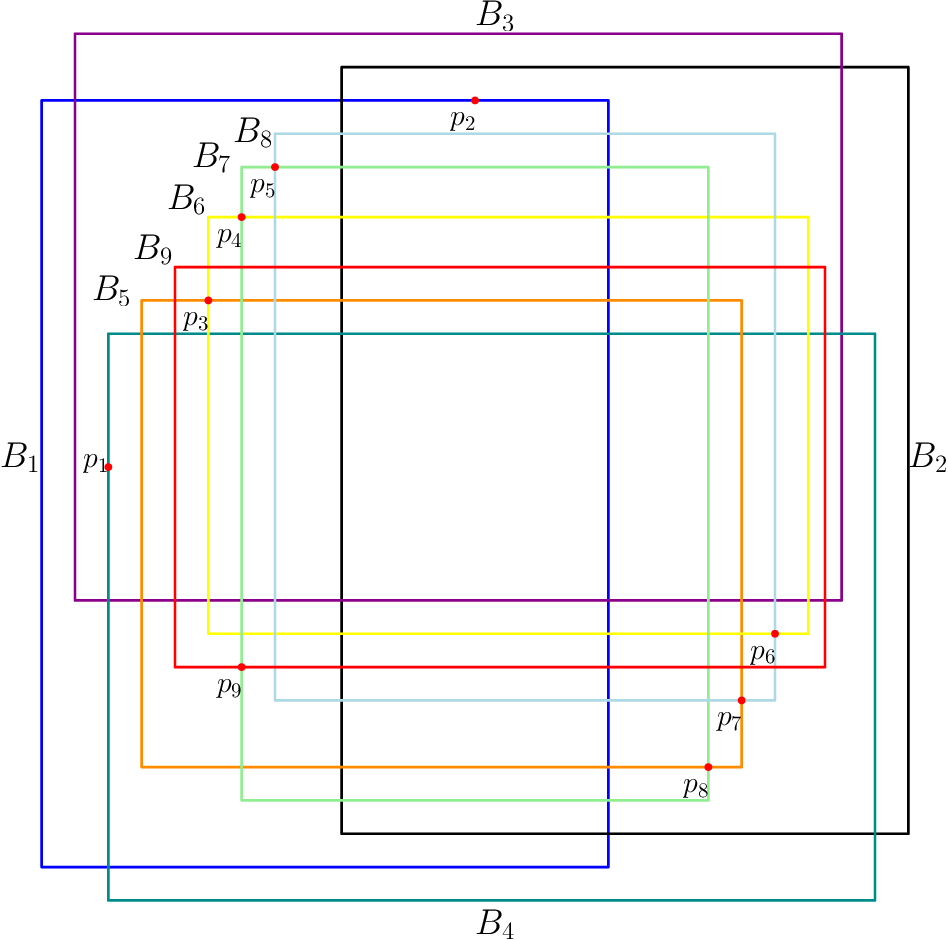}
    
    \caption{Example showing $N(2,2,2)\geq 3$}
    \label{fi5}
\end{figure}

Let us show that $N(2,2,2)\geq 3$, that is, there are a family of boxes and a finite set of points in the plane such that any two boxes share a common point in the point set and the family of boxes cannot be pierced by 2 points of this set.

Figure~\ref{fi5} illustrates a family $\mathcal B=\{B_1,\dots, B_9\}$ of 9 boxes and a set $P=\{p_1,\dots, p_9\}$ of 9 points in the plane. Any two boxes share a common point. In order to pierce $B_1,\dots, B_8$ by $2$ points, we must choose $p_4$ and $p_7$ for that, however the box $B_9$ is not pierced by those two points. The piercing number is at least 3.

By lifting this example to higher dimensional spaces, one can easily show that $N(2,2,d)\geq 3$ for $d\geq 2$.

\end{example}

\subsection{Using the $(p,q)$-type Halman theorem}

\begin{corollary}[corollary of Theorem~\ref{pq Halman theorem for q at least 2}]
\label{corollary of pq Halman theorem for q at least 2}
    Let $k$ and $d$ be positive integers with $2d-k\geq 2$. Let  $P\subset \mathbb R^d$ be a finite set, and let $\mathcal B$ be a finite family of boxes in $\mathbb R^d$. If for every subfamily $\mathcal B'\subseteq \mathcal B$ of size at most $2d-k$, its trace $\mathcal B'|_P$ is intersecting, then we have 
    \[
 \tau (\mathcal B|_P)\leq \begin{cases} N(3,3,1+\frac{k+1}{2}) & \text{ if $k$ is odd},  \\ N(2,2, 1+\frac{k}{2}) & \text{ if $k$ is even}. \end{cases} \]
\end{corollary}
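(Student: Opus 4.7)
The plan is to reduce the monochromatic problem to a low-dimensional $(p,q)$-problem by combining Lemma~\ref{lemma induction reduction baby version} with Theorem~\ref{pq Halman theorem for q at least 2}, splitting the argument according to the parity of $k$. Set $n=1$, $s = 2d-k$, and $\mathcal{B}_1 = \cdots = \mathcal{B}_s = \mathcal{B}$. The hypothesis of the corollary implies that every transversal of these identical families has intersecting trace on $P$, since such a transversal determines a subfamily of $\mathcal{B}$ of size at most $s$; hence the hypothesis of Lemma~\ref{lemma induction reduction baby version} is satisfied whenever $s \geq 3$.

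For even $k$ with $k \leq 2d-4$, I would take $t = d - 1 - k/2 \geq 1$ so that $d - t = 1 + k/2$ and $s - 2t = 2$. The lemma yields a finite set $P' \subset \mathbb{R}^{1+k/2}$ and a single family $\mathcal{B}'$ of boxes in $\mathbb{R}^{1+k/2}$ (all output families coincide by the third bullet of the lemma) such that every pair of boxes of $\mathcal{B}'$ has intersecting trace on $P'$ and $\tau(\mathcal{B}|_P) \leq \tau(\mathcal{B}'|_{P'})$. Thus $\mathcal{B}'|_{P'}$ has the $(2,2)$-property, and a transversal made of two copies of the same box shows that no box has empty trace, so Theorem~\ref{pq Halman theorem for q at least 2} gives $\tau(\mathcal{B}'|_{P'}) \leq N(2,2, 1+k/2)$. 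The odd case is analogous: for odd $k$ with $k \leq 2d-5$, I would take $t = d - (k+3)/2 \geq 1$ to reduce to dimension $1+(k+1)/2$ with $s - 2t = 3$, obtaining the $(3,3)$-property and the bound $N(3,3, 1+(k+1)/2)$. The remaining edge cases $k \in \{2d-2, 2d-3\}$ correspond to $t = 0$ and require no reduction: the family $\mathcal{B}|_P$ already has the $(s,s)$-property in dimension $d$, which equals $1 + k/2$ or $1+(k+1)/2$ respectively, and Theorem~\ref{pq Halman theorem for q at least 2} applies directly.

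The main obstacle is verifying that the reduction preserves the hypotheses of Theorem~\ref{pq Halman theorem for q at least 2}, namely the $(p,q)$-property and the absence of empty sets in the trace. Both follow from the monochromatic conclusion of Lemma~\ref{lemma induction reduction baby version}: the intersecting property of every $(s-2t)$-transversal of the reduced family is equivalent to the $(s-2t, s-2t)$-property of its trace, and the nonemptiness of each single-box trace follows by considering the transversal consisting of $s-2t$ copies of that box. Once these are in place, the bookkeeping to match the stated bounds is routine.
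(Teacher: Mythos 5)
Your proposal is correct and follows essentially the same route as the paper: set $\mathcal B_1=\dots=\mathcal B_s=\mathcal B$ with $s=2d-k$, apply Lemma~\ref{lemma induction reduction baby version} with $t=\lfloor\frac{2d-k-2}{2}\rfloor$ to reduce to a $(2,2)$- or $(3,3)$-problem in dimension $1+\frac{k}{2}$ or $1+\frac{k+1}{2}$, and finish with Theorem~\ref{pq Halman theorem for q at least 2}. Your explicit handling of the edge cases $k\in\{2d-2,2d-3\}$ (where $t=0$ and the lemma's hypothesis $t\geq 1$ fails, so one applies the theorem directly) and your verification that the reduced trace contains no empty set are details the paper glosses over, and they are handled correctly.
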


\begin{proof}
Our proof is based on Lemma~\ref{lemma induction reduction baby version}. To apply it, we set $s=2d-k$, $t=\lfloor \frac{2d-k-2}{2}\rfloor$, and $\mathcal B_1 = \dots = \mathcal B_{s} =\mathcal B$. Note that 
\[
s-2t=\begin{cases}
    2 \text{\quad if $k$ is even;}\\
    3 \text{\quad if $k$ is odd}
\end{cases}
\quad \text{ and }\quad
d-t=\begin{cases}
    1+\frac{k}{2}& \text{\quad if $k$ is even;}\\
    1+\frac{k+1}{2}& \text{\quad if $k$ is odd}.
\end{cases}
\]
By applying Lemma~\ref{lemma induction reduction baby version}, we have that there are a finite set $P'\subset \mathbb R^{d-t}$ and a family $\mathcal B'$ of boxes in $\mathbb R^{d-t}$ such that
\[
\tau(\mathcal B|_P)\leq \tau(\mathcal B'|_{P'})\leq N(s-2t,s-2t,d-t),
\]
where the last inequality holds by Theorem~\ref{pq Halman theorem for q at least 2}.
\end{proof}

\section{Discussion}\label{section: discussion}
We studied the quantitative colorful Helly problem for $2d-1$ families and obtained a nearly tight result. A natural direction for further study is to determine whether our lower bound is indeed tight.

\begin{problem}
    Let \( n \geq 1 \) and \( d \geq 2 \) be integers. Let $P\subset \mathbb R^d$ be a finite set, and let \( \mathcal{B}_1, \dots, \mathcal{B}_{2d-1} \) be finite families of boxes in $\mathbb R^d$. Prove or disprove that if for any their transversal $\mathcal B$, its trace $\mathcal B|_P$ is \( n \)-intersecting, then there exists $j\in [2d-1]$ such that $\tau_n'(\mathcal B_j|_P)\leq 2n$.
\end{problem}

Moreover, we believe that the approach developed to prove Theorem~\ref{quantitative colorful Halman 2d-1 colors} can be adapted to establish upper bounds when $d+1 \leq k \leq 2d-2$ families of boxes are considered instead of $2d-1$. As a concrete open problem, we propose the following question.

\begin{problem}
Let $P \subset \mathbb{R}^2$ be a finite set, and let $\mathcal{B}$ be a family of boxes. Prove or disprove that if, for every subfamily $\mathcal{B}' \subseteq \mathcal{B}$ of size at most $2$, the trace $\mathcal{B}'|_P$ is intersecting, then $\tau(\mathcal{B}|_P) \leq 3$.
\end{problem}

\subsection*{Acknowledgements}
AP is supported by the NSF grant DMS 2349045. 

The authors thank Alexander Golovanov for carefully proofreading the proof of Theorem~\ref{quantitative colorful Halman 2d-1 colors}, and M\'arton Nasz\'odi for bringing the paper~\cite{furedi1988matchings} to their attention.

\bibliographystyle{alpha}
\bibliography{references}

\end{document}